\documentclass[hidelinks,onefignum,onetabnum]{siamartdong}

\usepackage{amsfonts}
\usepackage{amssymb}
\usepackage{amsbsy}
\usepackage{latexsym}
\usepackage{bm}
\usepackage{mathrsfs}
\usepackage{graphicx}
\usepackage{epstopdf}
\usepackage{diagbox}
\usepackage{enumerate}
\usepackage{comment}
\usepackage{array}
\usepackage{booktabs}
\usepackage{multirow}
\usepackage{threeparttable}
\usepackage{subfigure}
\usepackage{float}
\usepackage[section]{placeins}
\usepackage{footnote}
\usepackage{arydshln}
\usepackage{algorithmic}

\makesavenoteenv{table}
\numberwithin{equation}{section}
\renewcommand{\arraystretch}{1.24}

\ifpdf
  \DeclareGraphicsExtensions{.pdf,.png,.jpg,.eps}
\else
  \DeclareGraphicsExtensions{.eps}
\fi

\graphicspath{{./ITACPFIGURE/}{./figure/}{./figures/}{./image/}{./images/}{./graphics/}{./graphic/}{./pictures/}{./picture/}}

\newsiamremark{remark}{Remark}
\newsiamremark{assumption}{Assumption}
\newsiamthm{method}{Method}
\newsiamthm{example}{Example}
\newsiamthm{problem}{Problem}

\headers{ITP-C for $\ell_{1-2}$ sparse recovery}{J. Wu, Z. Dong, and J.-F. Yin}

\title{Iterative Thresholding Pursuit with Continuation for
$\ell_{1-2}$-Regularized Sparse Recovery\thanks{\funding{This work was supported by the National Natural Science Foundation of China (Grant No. 12471357).}}}

\author{Junxi Wu\thanks{School of Mathematical Sciences, Tongji University,
No. 1239, Siping Road, Shanghai 200092, China
  (\email{w0521@tongji.edu.cn}).}
\and Zeyu Dong\thanks{Shanghai Research Institute for Intelligent Autonomous Systems, Tongji University,
Shanghai 200092, China
  (\email{dongzeyu@tongji.edu.cn}).}
\and Jun-Feng Yin\thanks{Key Laboratory of Intelligent Computing and Applications (Ministry of Education),
School of Mathematical Sciences, Tongji University,
No. 1239, Siping Road, Shanghai 200092, China
  (\email{yinjf@tongji.edu.cn}).}}

\ifpdf
\hypersetup{
  pdftitle={Iterative Thresholding Pursuit with Continuation for l1-2-Regularized Sparse Recovery},
  pdfauthor={Junxi Wu, Zeyu Dong, and Jun-Feng Yin}
}
\fi

\begin{document}

\maketitle

\begin{abstract}
Sparse recovery aims to reconstruct sparse signals from underdetermined and
possibly noisy linear measurements. Existing $\ell_{1-2}$ iterative
thresholding schemes are first-order methods. We propose an iterative
thresholding pursuit method with continuation (ITP-C) for
$\ell_{1-2}$-regularized sparse recovery. The method goes beyond first-order thresholding by combining the active-set identification capability of the $\ell_{1-2}$ proximal step with a restricted least-squares pursuit step that provides a second-order update on the identified support. The support is generated adaptively by the thresholding
update, and no prior knowledge of the true sparsity level is required. To control the possible instability of the pursuit step while preserving the
descent structure of the continuation scheme, we impose a strict descent check
with respect to the dynamic objective. We establish convergence of the
generated sequence under the Kurdyka--\L{}ojasiewicz framework and prove a local
oracle-type property after correct support identification. Numerical experiments
on synthetic sparse recovery and image reconstruction
illustrate the descent preservation of the proposed safeguard and demonstrate
the improved recovery performance of ITP-C over the state-of-the-art baselines.
\end{abstract}

\begin{keywords}
Sparse recovery, nonconvex regularization, $\ell_{1-2}$ minimization, iterative thresholding pursuit, Kurdyka--\L{}ojasiewicz convergence
\end{keywords}

\begin{AMS}
65K05, 90C26, 65F22, 94A12
\end{AMS}
\section{Introduction}
\label{sec:intro}

Sparse recovery is a key problem in compressed sensing and sparse linear
inverse problems \cite{candes2006robust,donoho2006compressed}. It aims to
reconstruct an unknown compressible signal from underdetermined
noisy observations
\begin{equation}
    \label{eq:linear_model}
    b=Ax+\epsilon,
\end{equation}
where $A\in\mathbb{R}^{m\times n}$ with $m\ll n$ is the sensing matrix,
$b\in\mathbb{R}^m$ is the observation vector, $\epsilon\in\mathbb{R}^m$
denotes the noise, and $x\in\mathbb{R}^n$ is the signal to be recovered.
Such models arise in a broad range of applications, including
medical image reconstruction \cite{lustig2007sparse}, radar signal processing
\cite{potter2010sparsity}, hyperspectral imaging
\cite{bioucas2010alternating}, and high-dimensional statistical learning
\cite{tibshirani1996regression,bickel2009simultaneous}.

A direct formulation of sparse recovery is the $\ell_0$-regularized
least-squares model
\begin{equation}
    \label{eq:l0_model}
    \min_{x\in\mathbb{R}^n}
    \frac12\|Ax-b\|_2^2+\lambda\|x\|_0 ,
\end{equation}
where $\|x\|_0$ counts the number of nonzero entries and $\lambda>0$ is a
regularization parameter. Although equation~\eqref{eq:l0_model} captures
sparsity directly, solving it globally is generally NP-hard
\cite{natarajan1995sparse}. Greedy and hard-thresholding methods, such as
Orthogonal Matching Pursuit (OMP) \cite{tropp2007signal}, Subspace Pursuit
(SP) \cite{dai2009subspace}, CoSaMP \cite{needell2009cosamp}, and Hard
Thresholding Pursuit (HTP) \cite{foucart2011hard}, provide computationally
efficient alternatives to the combinatorial search. These methods can recover
sparse signals under suitable conditions on the sensing matrix, but they
typically require the sparsity level as an input.

A widely used prior-free alternative is the convex $\ell_1$-regularized model
\begin{equation}
    \label{eq:l1_model}
    \min_{x\in\mathbb{R}^n}
    \frac12\|Ax-b\|_2^2+\lambda\|x\|_1 .
\end{equation}
Under suitable conditions such as the restricted isometry property,
$\ell_1$ minimization provides stable recovery guarantees
\cite{candes2008restricted}. It can also be solved efficiently by first-order
and operator-splitting algorithms such as FISTA \cite{beck2009fast} and ADMM
\cite{boyd2011distributed}. However, since the $\ell_1$ penalty penalizes all
nonzero coefficients linearly, it may introduce shrinkage bias on large active
coefficients. This limitation is closely related to the bias issue in
Lasso-type estimators \cite{fan2001variable,candes2008enhancing}. Such bias
can be detrimental in high-accuracy recovery tasks and motivates the use of
sharper continuous nonconvex sparsity-promoting penalties.

To reduce the shrinkage bias while retaining a continuous sparsity-promoting
formulation, various nonconvex penalties have been proposed, including the
SCAD penalty \cite{fan2001variable}, the MCP penalty \cite{zhang2010nearly},
the $\ell_p$ quasi-norm with $0<p<1$ \cite{chartrand2007exact}, and the
$\ell_{1-2}$ penalty \cite{lou2015minimization}. In this paper, we focus on
the $\ell_{1-2}$-regularized model
\begin{equation}
    \label{eq:intro_l12}
    \min_{x\in\mathbb{R}^n}
    F(x):=
    \frac12\|Ax-b\|_2^2+\lambda(\|x\|_1-\|x\|_2).
\end{equation}
The penalty $\|x\|_1-\|x\|_2$ is a difference-of-convex sparsity measure. It
vanishes on one-sparse vectors and penalizes multi-component spread, making it
a sharper continuous surrogate for sparsity than the $\ell_1$ norm in many
settings \cite{yin2015minimization,ge2018null,zhang2021null}. Lou and Yan
derived an explicit form of the $\ell_{1-2}$ proximal mapping and developed
iterative thresholding schemes for this model \cite{lou2018computing}.
Continuation-based ITAC methods further improve the stability of
$\ell_{1-2}$ iterative thresholding and reduce sensitivity to initialization
\cite{hu2025convergence}. Recent convergence theory for continuation-based
$\ell_{1-2}$ thresholding methods has been established in \cite{hu2026l12}.

Despite these developments, pure first-order $\ell_{1-2}$ thresholding may
still make slow local progress after the active set has nearly stabilized. The
proximal thresholding step is effective for identifying a candidate support,
but the active coefficients may still retain regularization-induced shrinkage.
This motivates an active-set refitting strategy, in which the active
coefficients are recomputed by solving a restricted least-squares problem on
the identified support. Related sparsity-constrained pursuit methods and Newton-type variants further show that the active-set refitting strategy and restricted second-order information can improve local behavior under suitable conditions \cite{yuan2014gradient,zhou2021global}.

To this end, we propose an iterative thresholding pursuit method with
continuation, abbreviated as ITP-C, for the $\ell_{1-2}$ sparse recovery model
in equation~\eqref{eq:intro_l12}. The method uses the $\ell_{1-2}$ proximal
step to generate an active set adaptively and then performs a restricted
least-squares pursuit step on the identified support. Since the support is
determined by the thresholding update, ITP-C does not require the true sparsity
level as a prescribed input. A key challenge is that the pursuit step may be
unstable when the identified support is inaccurate, especially in the nonconvex
setting. To address this issue, the pursuit candidate is accepted only when it passes a
strict descent check with respect to the dynamic objective, allowing
active-coefficient refitting to be incorporated while maintaining the descent
property of the continuation scheme. Under the Kurdyka--\L{}ojasiewicz framework, we prove that the generated
sequence converges to a critical point of the target objective. We further
establish local finite support identification and a conditional oracle property:
once the correct support is identified and the pursuit candidate passes the
descent check, ITP-C coincides with the oracle restricted least-squares
estimator and satisfies a stable noise-dependent error bound. Numerical experiments illustrate the role of the pursuit step and
the descent safeguard. In particular, the ablation study shows that an
unchecked pursuit step can produce local increases of the dynamic objective,
whereas the proposed descent check preserves monotonicity. Synthetic sparse
recovery experiments, empirical phase-transition tests, and image reconstruction examples further demonstrate improved recovery
performance over the tested prior-free baselines.

The remainder of this paper is organized as follows. Section
\ref{sec:preliminaries} introduces the notation, variational tools, and the
$\ell_{1-2}$ proximal mapping. Section~\ref{sec:proposed} presents the
proposed ITP-C algorithm and its descent-checked pursuit step. Section
\ref{sec:convergence} establishes the convergence theory, finite support
identification result, and local oracle property. Section
\ref{sec:experiments} reports the numerical experiments. Finally, Section
\ref{sec:conclusion} concludes the paper.

\section{Preliminaries}
\label{sec:preliminaries}

In this section, we first introduce the notation and recall several basic tools
from variational analysis and compressed sensing that will be used throughout
the paper. We then review the baseline ITAC method for the $\ell_{1-2}$ model,
which forms the basis of the proposed ITP-C method.

Throughout this paper, vectors are denoted by lowercase letters (e.g., $x, y$) and matrices by uppercase letters (e.g., $A$). For a vector $x \in \mathbb{R}^n$, its $i$-th component is $x_i$, and its support set is defined as $\operatorname{supp}(x) = \{i : x_i \neq 0\}$. The $\ell_p$-norm ($p \ge 1$) is defined as $\|x\|_p = (\sum_{i=1}^n |x_i|^p)^{1/p}$. Specifically, the $\ell_0$-norm, denoted by $\|x\|_0 = |\operatorname{supp}(x)|$, counts the number of nonzero entries in $x$. 

We formally define the set of all $s$-sparse vectors in $\mathbb{R}^n$ as:
\begin{equation}
    \Sigma_s := \{ x \in \mathbb{R}^n : \|x\|_0 \le s \}.
\end{equation}
For an index subset $T \subseteq \{1, \dots, n\}$, $|T|$ denotes its cardinality. The notation $x_T \in \mathbb{R}^{|T|}$ denotes the subvector restricting $x$ to the indices in $T$, and $A_T \in \mathbb{R}^{m \times |T|}$ represents the submatrix of $A$ consisting of the columns indexed by $T$. The transpose of a matrix is denoted by $A^{\top}$, and $A^\dagger$ represents the Moore-Penrose pseudoinverse, which is given by $(A^{\top} A)^{-1} A^{\top}$ when $A$ has full column rank. The identity matrix is denoted by $I$.

To handle the nonconvex and nonsmooth nature of the sparse penalty considered
in this paper, we recall the limiting subdifferential from variational analysis.

\begin{definition}[\cite{rockafellar1998variational}]
Let $F: \mathbb{R}^n \to \mathbb{R} \cup \{+\infty\}$ be a proper and lower semicontinuous function. The limiting subdifferential (or simply subdifferential) of $F$ at $x \in \operatorname{dom} F$, denoted by $\partial F(x)$, is defined as:
\begin{equation}
    \partial F(x) = \left\{ v \in \mathbb{R}^n : \exists x^k \to x, F(x^k) \to F(x), v^k \in \widehat{\partial} F(x^k) \text{ with } v^k \to v \right\},
\end{equation}
where $\widehat{\partial} F(x)$ is the Fr\'echet subdifferential of $F$ at $x$. A point $x^*$ is called a critical point (or stationary point) of $F$ if it satisfies the generalized Fermat's rule: $0 \in \partial F(x^*)$.
\end{definition}

Another fundamental concept in proximal splitting methods is the proximal
mapping, which dates back to Moreau \cite{moreau1965proximite}.

\begin{definition}
For a proper, lower semicontinuous function $R: \mathbb{R}^n \to \mathbb{R} \cup \{+\infty\}$ and a scalar $\tau > 0$, the proximal mapping of $R$ at a point $y \in \mathbb{R}^n$ is defined as the set of global minimizers:
\begin{equation}
    \operatorname{Prox}_{\tau R}(y) := \arg\min_{x \in \mathbb{R}^n} \left\{ \frac{1}{2}\|x - y\|_2^2 + \tau R(x) \right\}.
\end{equation}
When $R(x)$ is nonconvex, $\operatorname{Prox}_{\tau R}(y)$ may be multi-valued or empty, though it is guaranteed to be nonempty and compact if $R(x)$ is bounded from below.
\end{definition}

For analyzing the global convergence of the full sequence generated by descent algorithms in a nonconvex landscape, the Kurdyka--\L{}ojasiewicz (K\L{}) property is an indispensable geometric tool.

\begin{definition}[\cite{attouch2013convergence, bolte2014proximal}]
\label{def:kl_property}
A proper lower semicontinuous function $F$ is said to have the K\L{} property at $x^* \in \operatorname{dom} \partial F$ if there exists $\eta \in (0, +\infty]$, a neighborhood $U$ of $x^*$, and a continuous concave function $\phi: [0, \eta) \to \mathbb{R}_+$ (with $\phi(0)=0$ and $\phi' > 0$ on $(0, \eta)$), such that for all $x \in U \cap \{x : F(x^*) < F(x) < F(x^*) + \eta\}$, the following K\L{} inequality holds:
\begin{equation}
    \phi'(F(x) - F(x^*)) \operatorname{dist}(0, \partial F(x)) \ge 1.
\end{equation}
\end{definition}
Functions satisfying this property at all points in their domain are known as K\L{} functions. Notably, semi-algebraic functions are universally K\L{} functions \cite{bolte2014proximal}. The next lemma verifies that the $\ell_{1-2}$ objective used in this paper is a
K\L{} function.

\begin{lemma}
\label{lemma:kl_property}
For any fixed $\lambda>0$, define
\[
F_{\lambda}(x)
=
\frac{1}{2}\|Ax-b\|_2^2
+
\lambda(\|x\|_1-\|x\|_2).
\]
Then $F_{\lambda}$ is a proper lower semicontinuous semi-algebraic function.
Consequently, $F_{\lambda}$ satisfies the Kurdyka--\L{}ojasiewicz property at
every point in $\operatorname{dom}\partial F_{\lambda}$.
\end{lemma}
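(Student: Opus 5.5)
The plan is to check the three properties separately and then invoke the cited result that semi-algebraic functions are K\L{} functions \cite{bolte2014proximal}. Properness and lower semicontinuity are immediate: $F_\lambda$ is finite-valued on all of $\mathbb{R}^n$, so $\operatorname{dom}F_\lambda=\mathbb{R}^n$ and $F_\lambda$ never takes the value $-\infty$. Moreover, $F_\lambda$ is continuous, being built from the continuous maps $x\mapsto\|Ax-b\|_2^2$, $\|\cdot\|_1$ and $\|\cdot\|_2$. In particular it is lower semicontinuous. Note also that $F_\lambda\ge 0$, since $\|x\|_1\ge\|x\|_2$.

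The main work is semi-algebraicity. I will use the standard stability facts: the class of semi-algebraic functions is closed under finite sums and compositions, polynomials are semi-algebraic, and a function is semi-algebraic iff its graph is a semi-algebraic set. The quadratic term $\frac12\|Ax-b\|_2^2$ is a polynomial in $x$, hence semi-algebraic.

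For $\|x\|_1=\sum_i|x_i|$, it suffices to treat $|x_i|$. Its graph is
\[
\{(x,t): t^2=x_i^2,\ t\ge 0\},
\]
which is defined by finitely many polynomial equalities and inequalities, so it is semi-algebraic. A finite sum of such functions is semi-algebraic.

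For $\|x\|_2$, the graph is
\[
\{(x,t): t^2=\sum_i x_i^2,\ t\ge 0\},
\]
which is again a basic semi-algebraic set. Scalar multiplication by $\lambda$ and the difference $\|x\|_1-\|x\|_2$ preserve semi-algebraicity, so $F_\lambda$ is semi-algebraic.

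The only step needing care is justifying closure under sums. One route is to cite the Tarski--Seidenberg principle: the graph of $f+g$ is the projection of the semi-algebraic set $\{(x,s,u,t): s=f(x),\ u=g(x),\ t=s+u\}$, and projections of semi-algebraic sets are semi-algebraic. Alternatively, I can simply cite the standard list of semi-algebraic operations in \cite{bolte2014proximal}.

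Finally, I apply the theorem from \cite{bolte2014proximal,attouch2013convergence} that a proper lower semicontinuous semi-algebraic function satisfies the K\L{} property at every point of $\operatorname{dom}\partial F_\lambda$, with desingularizing function $\phi(s)=cs^{1-\theta}$ for some $\theta\in[0,1)$. This gives the conclusion. As a side remark, $F_\lambda$ is locally Lipschitz, so $\partial F_\lambda(x)\neq\emptyset$ everywhere and $\operatorname{dom}\partial F_\lambda=\mathbb{R}^n$.
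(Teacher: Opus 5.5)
Your proposal is correct and follows essentially the same route as the paper. Both arguments get semi-algebraicity from the graph descriptions $\{t^2=x_i^2,\ t\ge 0\}$ and $\{t^2=\sum_i x_i^2,\ t\ge 0\}$ plus closure under sums and scalar multiples, obtain properness and lower semicontinuity from finiteness and continuity, and then invoke the standard K\L{} result for semi-algebraic functions. Your additional remarks (the Tarski--Seidenberg justification of closure under sums, and $\operatorname{dom}\partial F_\lambda=\mathbb{R}^n$ by local Lipschitz continuity) are correct refinements that the paper leaves implicit.
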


\begin{proof}
The data-fidelity term $\frac12\|Ax-b\|_2^2$ is a quadratic polynomial and is
therefore semi-algebraic. The absolute value function is semi-algebraic since
its graph can be represented by
\[
\{(t,s)\in\mathbb{R}^2:s^2=t^2,\ s\ge 0\}.
\]
Hence $\|x\|_1=\sum_{i=1}^n |x_i|$ is semi-algebraic. Similarly, the Euclidean
norm is semi-algebraic because its graph can be represented by
\[
\left\{(x,s)\in\mathbb{R}^n\times\mathbb{R}:
s^2=\sum_{i=1}^n x_i^2,\ s\ge 0
\right\}.
\]
The class of semi-algebraic functions is closed under finite sums and scalar
multiplication. Therefore,
\[
F_{\lambda}(x)
=
\frac{1}{2}\|Ax-b\|_2^2
+
\lambda(\|x\|_1-\|x\|_2)
\]
is semi-algebraic. Since $F_{\lambda}$ is finite-valued and continuous on
$\mathbb{R}^n$, it is proper and lower semicontinuous. By the standard K\L{} theory for proper lower semicontinuous semi-algebraic
functions \cite{attouch2013convergence,bolte2014proximal}, $F_\lambda$
satisfies the K\L{} property at every point in
$\operatorname{dom}\partial F_\lambda$.
\end{proof}

In the paradigm of compressive sensing, the theoretical guarantee for accurately recovering high-dimensional sparse signals from low-dimensional projections heavily relies on the properties of the sensing matrix $A$.

\begin{definition}[\cite{candes2008restricted}]
\label{def:rip}
A sensing matrix $A \in \mathbb{R}^{m \times n}$ is said to satisfy the Restricted Isometry Property (RIP) of order $s$ if there exists a constant $\delta_s \in (0, 1)$ such that the inequalities
\begin{equation}
    (1 - \delta_s) \|x\|_2^2 \le \|Ax\|_2^2 \le (1 + \delta_s) \|x\|_2^2
\end{equation}
hold simultaneously for all $s$-sparse vectors $x \in \Sigma_s$.
\end{definition}
The RIP fundamentally ensures that every column sub-matrix $A_T$ of size $m \times s$ (where $s \ll m$) behaves nearly like an isometry, securing its full column rank and the well-posedness of the least squares problems restricted to the subspace $T$.

\subsection{The ITAC Algorithm}


Before presenting ITP-C, we review the baseline first-order method ITAC for the $\ell_{1-2}$ model, which provides the foundational proximal thresholding step and continuation strategy used in ITP-C. Given a step size $v \in (0, 1/\|A\|_2^2)$, ITAC updates the intermediate variable $z^k$ through a gradient descent step $$z^{k} = x^k - v A^{\top}(Ax^k - b),$$ followed by the $\ell_{1-2}$ proximal mapping, whose explicit analytical expression is given in Lemma~\ref{lemma:proximal_l12}.

\begin{lemma}[\cite{lou2018computing}]
\label{lemma:proximal_l12}
For any given vector $y \in \mathbb{R}^n$ and parameter $\tau > 0$, the exact solution set of the proximal mapping $x^* \in \operatorname{Prox}_{\tau(\|\cdot\|_1 - \|\cdot\|_2)}(y)$ satisfies:
\begin{itemize}
    \item[(i)] If $\|y\|_\infty > \tau$, then $x^* = \left( 1 + \frac{\tau}{\|z\|_2} \right) z$, where $z := (|y| - \tau)_+ \odot \operatorname{sign}(y)$;
    \item[(ii)] If $\|y\|_\infty = \tau$, then $\|x^*\|_2 = \tau$ and $x_i^* y_i \ge 0$ for all $i \in \{1,\dots,n\}$, with $x_i^* = 0$ when $|y_i| < \tau$;
    \item[(iii)] If $\|y\|_\infty < \tau$, then $x^*$ is a $1$-sparse vector satisfying $\|x^*\|_2 = \|y\|_\infty$ and $x_i^* y_i \ge 0$, with $x_i^* = 0$ when $|y_i| < \|y\|_\infty$.
\end{itemize}
\end{lemma}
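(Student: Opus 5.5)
The proof reduces the proximal problem to the nonnegative orthant and then to a one-parameter family of $\ell_1$ problems indexed by the norm of the minimizer.

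\textbf{Sign reduction.} Write $h(x)=\frac12\|x-y\|_2^2+\tau(\|x\|_1-\|x\|_2)$ and expand
\[
h(x)=\tfrac12\|x\|_2^2-\langle x,y\rangle+\tfrac12\|y\|_2^2+\tau\|x\|_1-\tau\|x\|_2 .
\]
Only the term $-\langle x,y\rangle$ depends on the signs of the entries. Replacing $x_i$ by $|x_i|\operatorname{sign}(y_i)$ leaves every other term unchanged and can only decrease $h$. Hence some minimizer has $x_iy_i\ge 0$ for all $i$. Moreover, every minimizer has this property: if $x_iy_i<0$, flipping the sign of $x_i$ strictly decreases $h$. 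So we may assume $y\ge 0$ and $x\ge 0$.

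\textbf{Reduction to a norm parameter.} On the constraint set $\|x\|_2=r$, the function $h$ equals, up to constants depending only on $r$,
\[
\sum_i (\tau-y_i)x_i ,
\]
a linear function minimized over the sphere intersected with the orthant. The minimizer aligns $x$ with the positive part of $y-\tau\mathbf 1$ whenever that vector is nonzero. This yields $x=r\,z/\|z\|_2$ with $z=(y-\tau)_+$, and restricted to this ray
\[
h=\tfrac12 r^2-r\|z\|_2-\tau r+\text{const}.
\]
Minimizing over $r\ge 0$ gives $r=\|z\|_2+\tau$, that is, $x^*=(1+\tau/\|z\|_2)z$. This proves case (i), which applies when $\|y\|_\infty>\tau$, since then $z\neq 0$.

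\textbf{The degenerate cases.} When $\|y\|_\infty\le\tau$, every coefficient $\tau-y_i$ is nonnegative, so for fixed $r$ we minimize a nonnegative linear form over the set
\[
\{x\ge 0,\ \|x\|_2=r\}.
\]
Since $\|x\|_1\ge\|x\|_2$ with equality exactly for $1$-sparse vectors, the minimum value is
\[
r\min_i(\tau-y_i)=r(\tau-\|y\|_\infty),
\]
attained by placing all mass on an index achieving $\|y\|_\infty$. In case (iii), $\|y\|_\infty<\tau$, so this coefficient is strictly positive. Hence any minimizer must be $1$-sparse and supported on a maximal index, for otherwise $\sum_i(\tau-y_i)x_i>r(\tau-\|y\|_\infty)$. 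The one-dimensional problem in $r$ is then
\[
\tfrac12r^2-r\|y\|_\infty+\text{const},
\]
giving $r=\|y\|_\infty$.

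In case (ii) the linear coefficient $\tau-y_i$ vanishes exactly on the indices with $y_i=\tau$. The $r$-problem becomes $\tfrac12 r^2-\tau r$, giving $\|x^*\|_2=\tau$. Any positive entry on an index with $y_i<\tau$ incurs strictly positive extra cost, so $x^*_i=0$ there.

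\textbf{Main obstacle.} The hardest step is the exact-attainment argument over the sphere in cases (ii) and (iii). Showing that minimizers attain the linear bound forces the characterizations of which entries vanish, and these require care with ties and strict inequalities. In case (i) there is an additional subtlety when $y=0$ is excluded. Existence of a minimizer follows because $h$ is coercive, as $\|x\|_1-\|x\|_2\ge 0$.
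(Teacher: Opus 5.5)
Your proof is correct, and it necessarily differs from the paper's treatment, because the paper gives no proof of this lemma. It imports the result from \cite{lou2018computing}, where the derivation works from the first-order optimality condition $0\in x-y+\tau\partial\|x\|_1-\tau x/\|x\|_2$ for $x\neq 0$. That condition is combined with sign and ordering properties of minimizers, and objective values are compared to decide which stationary points are global minimizers.

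Your device of fixing $r=\|x\|_2$ is genuinely different. It freezes the nonconvex term $-\tau\|x\|_2$, so after the sign reduction each slice is a linear minimization over the nonnegative part of a sphere. That is solved by Cauchy--Schwarz in case (i) and by $\|x\|_1\ge\|x\|_2$ in cases (ii)--(iii), leaving a strictly convex scalar quadratic in $r$. This gives existence, uniqueness in (i), and the full solution sets in (ii)--(iii) in one global argument. You need no subdifferential of $-\|x\|_2$ at the origin and never have to rule out non-minimizing stationary points. The optimality-condition route, in exchange, describes the stationary points of the prox subproblem and matches the inclusion the paper uses later in Lemma~\ref{lemma:relative_error}.

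Three small repairs are needed.
\begin{itemize}
\item When $y_i=0$, the map $x_i\mapsto|x_i|\operatorname{sign}(y_i)$ zeroes the entry and changes the norms. Leave such coordinates as $\pm|x_i|$ instead: they satisfy $x_iy_i=0$ anyway, and reflecting them maps minimizers to minimizers, so the reduction to $x\ge 0$ loses nothing.
\item The alignment claim in (i) is the crux and should be justified. Write $\tau\mathbf{1}-y=(\tau\mathbf{1}-y)_+-z$. For $x\ge 0$ with $\|x\|_2=r$, this gives $\langle\tau\mathbf{1}-y,x\rangle\ge-\langle z,x\rangle\ge-r\|z\|_2$, with equality if and only if $x=rz/\|z\|_2$. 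The first inequality is then tight because $z$ and $(\tau\mathbf{1}-y)_+$ have disjoint supports. This uniqueness is exactly what (i) asserts about every element of the prox.
\item Your remark about $y=0$ in case (i) is vacuous, since $\|y\|_\infty>\tau>0$. The case $y=0$ only occurs in (iii), where $r=0$ gives $x^*=0$; this is admissible under the paper's convention $\Sigma_1=\{x:\|x\|_0\le 1\}$.
\end{itemize}
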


In the standard implementation of ITAC, the continuation strategy usually drives
$\lambda_k$ to a relatively small terminal value, so the nondegenerate case
$\|\mathcal{S}_{v\lambda_k}(z^k)\|_2>0$ is typically encountered in practice.
For theoretical completeness, and consistently with the exact proximal
characterization in~\cite{lou2018computing}, we use a deterministic exact selection
from the possibly multi-valued proximal mapping. Setting $\tau=v\lambda_k$, the
baseline ITAC step generates $h^{k+1}$ as follows:
\begin{equation}
\label{eq:itac_update}
h^{k+1} =
\begin{cases}
\left(
1+\dfrac{v\lambda_k}
{\|\mathcal{S}_{v\lambda_k}(z^k)\|_2}
\right)
\mathcal{S}_{v\lambda_k}(z^k),
& \text{if } \|\mathcal{S}_{v\lambda_k}(z^k)\|_2>0, \\[3mm]
z^k_{j_k} e_{j_k},
& \text{if } \|\mathcal{S}_{v\lambda_k}(z^k)\|_2=0,
\end{cases}
\end{equation}
where $\mathcal{S}_{\tau}(\cdot)$ is the standard soft-thresholding operator,
$e_j$ denotes the $j$-th canonical basis vector, and
\begin{equation}
\label{eq:deterministic_index}
j_k := \min \arg\max_{1\le i\le n}|z_i^k|.
\end{equation}
This deterministic tie-breaking rule only affects degenerate cases. 

According to equation~\eqref{eq:itac_update}, the ITAC proximal step first
applies soft-thresholding to remove small components and then rescales the
retained entries. This provides an adaptive mechanism for identifying a
candidate support. Once such a support becomes reliable, the active
coefficients can be further refined by a restricted least-squares step. This
observation motivates the pursuit step introduced in the next section.

\section{The ITP-C algorithm}
\label{sec:proposed}

In this section, we construct the proposed Iterative Thresholding Pursuit
method with Continuation (ITP-C) for the $\ell_{1-2}$ sparse recovery model.
The method builds on the baseline ITAC proximal step reviewed in Section
\ref{sec:preliminaries}, and augments it with a restricted least-squares
pursuit step on the adaptively identified active set. The resulting method goes beyond a purely first-order thresholding update by
combining prior-free support identification with a restricted least-squares
pursuit step, which can be interpreted as a Newton-type update for the
quadratic data-fitting term on the identified support. A strict descent
check is further incorporated to control the possible instability of the
pursuit step and to preserve the descent structure required by the convergence
analysis.

\subsection{Restricted least-squares pursuit step}
\label{subsec:pursuit_step}

Classical pursuit-type sparse recovery algorithms often combine support
selection with a least-squares update on the selected support. For example,
CoSaMP, Subspace Pursuit, and Hard Thresholding Pursuit use thresholding or
support selection to form an active set, followed by a restricted
least-squares step to update the active coefficients
\cite{needell2009cosamp,dai2009subspace,foucart2011hard}. Related
hard-thresholding pursuit methods for sparsity-constrained optimization and
their Newton-type variants further indicate that restricted active-variable
corrections can improve local behavior when the support information is
reliable \cite{yuan2014gradient,zhou2021global}.

This pursuit philosophy is adapted here to the continuous nonconvex
$\ell_{1-2}$ regularized setting. Unlike sparsity-constrained pursuit methods,
the support size is not prescribed in advance. Instead, the active set is
generated by the $\ell_{1-2}$ proximal thresholding step itself. At iteration
$k$, the baseline ITAC step in equation~\eqref{eq:itac_update} generates the
candidate $h^{k+1}$, and we define
\begin{equation}
\label{eq:itpc_active_set}
    T^{k+1}=\operatorname{supp}(h^{k+1}).
\end{equation}
The pursuit step then computes a restricted least-squares estimator on this
identified support:
\begin{equation}
\label{eq:itpc_pursuit_candidate}
    (x_p^{k+1})_{T^{k+1}}=A_{T^{k+1}}^\dagger b,
    \qquad
    (x_p^{k+1})_{(T^{k+1})^c}=0.
\end{equation}
Here the term ``pursuit'' refers to the restricted least-squares refitting on
the support identified by the thresholding step. It is different from classical
sparsity-prescribed pursuit methods, since the support is generated
adaptively and the true sparsity level is not required as an input.

The role of equation~\eqref{eq:itpc_pursuit_candidate} is to recompute the
active coefficients without the regularization penalty. This reduces the
regularization-induced shrinkage left by the thresholding update. The accuracy of this correction therefore depends on the reliability of the
identified active set and on the conditioning of the corresponding sensing
submatrix. The least-squares pursuit step also admits a restricted Newton-type
interpretation for the data-fitting term. For a fixed support $T$, the
restricted quadratic function $u\mapsto \frac12\|A_Tu-b\|_2^2$ has Hessian
$A_T^{\top}A_T$. When $A_T$ has full column rank, the Newton step for this
restricted quadratic reaches the least-squares minimizer
$(A_T^{\top}A_T)^{-1}A_T^{\top}b$ in one step. Thus, once the active set is reliable,
the pursuit step provides a second-order active-set correction to the
first-order thresholding update.


\subsection{Descent safeguard and complete algorithm}
\label{subsec:descent_safeguard}

Although the restricted least-squares pursuit step can substantially improve
the active coefficients, it should not be accepted unconditionally. During
early iterations, the active set identified by the proximal thresholding step
may still contain false indices or miss part of the true support; see, e.g., \cite{belloni2013least,lederer2013trust,chzhen2019lasso}. In that case,
the least-squares candidate may reduce the residual on the selected support
but still increase the nonconvex regularized objective.

Safeguarding aggressive correction steps is a standard idea in Newton-type and
second-order optimization methods, where line-search or descent conditions are
used to prevent unreliable full steps when the local model is inaccurate
\cite{nocedal2006numerical}. In the present nonsmooth nonconvex setting, we
use a direct descent safeguard with respect to the dynamic objective. The
pursuit candidate $x_p^{k+1}$ is accepted only when it strictly improves the dynamic objective relative to the proximal candidate $h^{k+1}$:
\begin{equation}
\label{eq:descent_check}
    x^{k+1} =
    \begin{cases}
    x_p^{k+1}, &
    \text{if } F_k(x_p^{k+1})<F_k(h^{k+1}), \\[1mm]
    h^{k+1}, &
    \text{otherwise}.
    \end{cases}
\end{equation}
Whenever the pursuit candidate is accepted, it gives a strict objective
reduction compared with $h^{k+1}$. If the pursuit candidate does not strictly
improve the dynamic objective, the proximal candidate is retained. In this way,
ITP-C incorporates the active-set least-squares pursuit step while maintaining
the descent property of the continuation scheme.

The proposed ITP-C method is summarized in Algorithm~\ref{alg:itp_c}.

\begin{algorithm}[H]
\caption{The ITP-C algorithm for $\ell_{1-2}$-regularized sparse recovery}
\label{alg:itp_c}
\begin{algorithmic}[1]
\REQUIRE Matrix $A\in\mathbb{R}^{m\times n}$, vector $b\in\mathbb{R}^m$, step size
$v\in(0,1/\|A\|_2^2)$, decay factor $\gamma\in(0,1)$, initial parameter
$\lambda_0$, target parameter $\lambda_{\rm tar}$, maximum iteration number
$K_{\max}$.
\STATE Initialize $x^0=\mathbf{0}$.
\FOR{$k=0,1,\ldots,K_{\max}-1$}
    \STATE Set $\lambda_k=\max\{\lambda_0\gamma^k,\lambda_{\rm tar}\}$.
    \STATE Compute $z^k=x^k-vA^{\top}(Ax^k-b)$, and obtain $h^{k+1}$ by
    equation~\eqref{eq:itac_update} with $\tau=v\lambda_k$.
    \STATE Set $T^{k+1}=\operatorname{supp}(h^{k+1})$.
    \IF{$|T^{k+1}|<m$}
        \STATE Compute $x_p^{k+1}$ by
        $(x_p^{k+1})_{T^{k+1}}=A_{T^{k+1}}^\dagger b$ and
        $(x_p^{k+1})_{(T^{k+1})^c}=0$.
    \ELSE
        \STATE Set $x_p^{k+1}=h^{k+1}$.
    \ENDIF
    \IF{$F_k(x_p^{k+1})<F_k(h^{k+1})$}
        \STATE Set $x^{k+1}=x_p^{k+1}$.
    \ELSE
        \STATE Set $x^{k+1}=h^{k+1}$.
    \ENDIF
    \IF{the stopping criterion is satisfied}
        \STATE Stop and return $\hat{x}=x^{k+1}$.
    \ENDIF
\ENDFOR
\STATE Return $\hat{x}=x^{K_{\max}}$.
\end{algorithmic}
\end{algorithm}

The active set $T^{k+1}$ is generated adaptively by the proximal thresholding
step and is not prescribed by a sparsity level. To avoid an ill-conditioned or
overly dense least-squares correction, the pursuit step is performed only when
$|T^{k+1}|<m$; otherwise, the proximal candidate is retained. The
Moore--Penrose pseudoinverse provides a deterministic minimum-norm solution of
the restricted least-squares problem, which is unique whenever
$A_{T^{k+1}}$ has full column rank. The descent check then ensures that only
objective-decreasing corrections are accepted.

\section{Convergence analysis}
\label{sec:convergence}

In this section, a rigorous convergence theory is established for the proposed ITP-C algorithm. Considering the continuation strategy employed in the actual implementation, the regularization parameter $\lambda_k$ decreases monotonically (i.e., $\lambda_{k+1} \le \lambda_k$). Therefore, the dynamic objective function is defined associated with iteration $k$ as:
\begin{equation}
\label{eq:dynamic_obj}
F_k(x) := f(x) + \lambda_k R(x) = \frac{1}{2}\|Ax - b\|_2^2 + \lambda_k(\|x\|_1 - \|x\|_2),
\end{equation}
where $f(x)=\frac{1}{2}\|Ax-b\|_2^2$ and
$R(x)=\|x\|_1-\|x\|_2$.

\subsection{Coercivity and sufficient decrease}

Before analyzing the sequence generated by the algorithm \ref{alg:itp_c}, we first establish a coercivity
property of the objective function, which will be used to obtain boundedness of
the iterates.

\begin{definition}
\label{def:coercivity}
A continuous function $F: \mathbb{R}^n \to \mathbb{R} \cup \{+\infty\}$ is said to be coercive if $$\lim_{\|x\|_2 \to \infty} F(x)=+\infty.$$
\end{definition}

For the dynamic objective in \eqref{eq:dynamic_obj}, coercivity is not
automatic because the $\ell_{1-2}$ penalty has null directions along one-sparse
vectors. In the compressed sensing setting, this issue can be ruled out by
standard assumptions on the sensing matrix, such as the restricted isometry
property.

\begin{proposition}
\label{prop:coercivity}
For any fixed $\lambda_k>0$, the dynamic objective $F_k$ in equation~\eqref{eq:dynamic_obj} is coercive if and only if the null space of $A$ contains no nonzero one-sparse vector. Equivalently, with $\mathcal{Z}:=\{x\in\mathbb{R}^n\setminus\{0\}:\|x\|_1=\|x\|_2\}$, the necessary and sufficient condition is $\mathcal{N}(A)\cap \mathcal{Z}=\emptyset$.
In particular, if $A$ satisfies the RIP of order one with constant
$\delta_1\in(0,1)$, then $F_k$ is coercive.
\end{proposition}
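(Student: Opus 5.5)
First I identify $\mathcal{Z}$ concretely. For $x\neq 0$ we have $\|x\|_1\ge\|x\|_2$, and equality holds exactly when $x$ has a single nonzero entry: $\|x\|_1^2=\|x\|_2^2+\sum_{i\ne j}|x_i||x_j|$, and the cross terms vanish only when at most one entry is nonzero. So $\mathcal{Z}$ is the set of nonzero one-sparse vectors. This gives the equivalence of the two formulations of the condition, and shows the RIP clause is immediate: RIP of order one gives $\|Ae_j\|_2^2\ge 1-\delta_1>0$, so no nonzero one-sparse vector lies in $\mathcal{N}(A)$.

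\emph{Necessity.} Suppose some $e_j$ (after scaling) lies in $\mathcal{N}(A)$. Take $x=te_j$ with $t\to\infty$. Then $R(te_j)=0$ and $Ate_j=0$, so $F_k(te_j)=\frac12\|b\|_2^2$ for all $t$. Since $\|te_j\|_2\to\infty$ while $F_k$ stays bounded, $F_k$ is not coercive.

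\emph{Sufficiency.} I argue by contradiction with a normalized sequence. Suppose $\mathcal{N}(A)\cap\mathcal{Z}=\emptyset$ but there are $x^j$ with $\|x^j\|_2\to\infty$ and $F_k(x^j)\le M$. Both terms of $F_k$ are nonnegative, so:
\begin{itemize}
\item $\frac12\|Ax^j-b\|_2^2\le M$, hence $\|Ax^j\|_2\le \sqrt{2M}+\|b\|_2$;
\item $\lambda_k R(x^j)\le M$.
\end{itemize}
Set $u^j=x^j/\|x^j\|_2$ and pass to a subsequence with $u^j\to u$, where $\|u\|_2=1$. Dividing the two bounds by $\|x^j\|_2$ gives $\|Au^j\|_2\to 0$ and $R(u^j)\to 0$, using that $R$ is positively homogeneous of degree one. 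By continuity, $Au=0$ and $\|u\|_1=\|u\|_2$. Thus $u\in\mathcal{N}(A)\cap\mathcal{Z}$, a contradiction.

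The main point needing care is this sufficiency step. The bounds do not directly force $x^j$ to be bounded, because $R$ vanishes along one-sparse directions. The normalization together with the homogeneity of both $Ax$ and $R$ is what resolves it, so I would state that homogeneity step explicitly. Everything else is routine.
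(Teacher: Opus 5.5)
Your proposal is correct and follows essentially the same route as the paper. Both arguments normalize $u^j=x^j/\|x^j\|_2$, extract a convergent subsequence, and deduce $Au=0$ and $\|u\|_1=\|u\|_2$ for sufficiency, and both use $F_k(tu)=\frac12\|b\|_2^2$ along a null one-sparse direction for necessity. Your differences are minor but welcome: you bound the two nonnegative terms of $F_k$ separately instead of dividing the combined inequality by $r_j^2$ and then by $r_j$, and you explicitly check that $\mathcal{Z}$ is exactly the set of nonzero one-sparse vectors, which the paper uses without proof.
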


\begin{proof}

We first prove the sufficiency. Suppose that $\mathcal{N}(A)\cap \mathcal{Z}=\emptyset$. We show that $F_k$ is coercive. Assume, by contradiction, that $F_k$ is not coercive. Then there exists a sequence $\{x^j\}_{j=1}^{\infty}$ such that $\|x^j\|_2\to\infty$ and $F_k(x^j)\le C$ for some constant $C>0$ and all $j$. Let $r_j:=\|x^j\|_2$ and $u^j:=x^j/r_j\in\mathbb{S}^{n-1}$. Since the unit sphere is compact, there exists a subsequence, still denoted by $\{u^j\}$ for simplicity, such that $u^j\to u^*\in\mathbb{S}^{n-1}$.

Using $x^j=r_j u^j$, the boundedness of $F_k(x^j)$ gives
\begin{equation}
\label{eq:coercivity_bound}
\frac{1}{2}\|r_jAu^j-b\|_2^2
+
\lambda_k r_j(\|u^j\|_1-\|u^j\|_2)
\le C .
\end{equation}
Dividing equation~\eqref{eq:coercivity_bound} by $r_j^2$ yields
\[
\frac{1}{2}\|Au^j\|_2^2
-
\frac{\langle Au^j,b\rangle}{r_j}
+
\frac{\|b\|_2^2}{2r_j^2}
+
\frac{\lambda_k(\|u^j\|_1-\|u^j\|_2)}{r_j}
\le
\frac{C}{r_j^2}.
\]
Letting $j\to\infty$ gives $\frac{1}{2}\|Au^*\|_2^2\le0$, hence $Au^*=0$ and $u^*\in\mathcal{N}(A)$.

Next, divide equation~\eqref{eq:coercivity_bound} by $r_j$:
\[
\frac{1}{2}r_j\|Au^j\|_2^2
-
\langle Au^j,b\rangle
+
\frac{\|b\|_2^2}{2r_j}
+
\lambda_k(\|u^j\|_1-\|u^j\|_2)
\le
\frac{C}{r_j}.
\]
Since the first term on the left-hand side is nonnegative, we may discard it
and obtain
\[
-
\langle Au^j,b\rangle
+
\frac{\|b\|_2^2}{2r_j}
+
\lambda_k(\|u^j\|_1-\|u^j\|_2)
\le
\frac{C}{r_j}.
\]
Taking the limit and using $Au^*=0$, we obtain $\lambda_k(\|u^*\|_1-\|u^*\|_2)\le0$. Since $\lambda_k>0$ and $\|u^*\|_1\ge \|u^*\|_2$ for every vector, it follows that $\|u^*\|_1=\|u^*\|_2$. Thus $u^*\in\mathcal{Z}$, and together with $u^*\in\mathcal{N}(A)$ this gives $u^*\in\mathcal{N}(A)\cap\mathcal{Z}$, contradicting $\mathcal{N}(A)\cap\mathcal{Z}=\emptyset$. Hence $F_k$ is coercive.

We next show the converse implication. Suppose that there exists a nonzero vector $u\in\mathcal{N}(A)\cap\mathcal{Z}$. Then $Au=0$ and $\|u\|_1=\|u\|_2$. For any $t>0$, taking $x=tu$ gives $F_k(tu)=\frac12\|b\|_2^2$, while $\|tu\|_2=t\|u\|_2\to\infty$. Therefore, $F_k$ is not coercive, which proves the equivalence.

Finally, if $A$ satisfies the RIP of order one with constant
$\delta_1\in(0,1)$, then every one-sparse vector $u$ satisfies
\[
\|Au\|_2^2\ge (1-\delta_1)\|u\|_2^2>0.
\]
Hence no nonzero one-sparse vector can lie in $\mathcal{N}(A)$, which implies $\mathcal{N}(A)\cap\mathcal{Z}=\emptyset$. Therefore, $F_k$ is coercive.
\end{proof}

By Proposition~\ref{prop:coercivity}, the coercivity of $F_k$, together with its continuity,
implies that each lower level set $\{x\in\mathbb{R}^n:F_k(x)\le \alpha\}$ is
compact. This compactness will be used to establish boundedness of the
generated sequence. We next recall
the descent property of the first-order proximal gradient step. Let
$L=\|A\|_2^2$ be the Lipschitz constant of $\nabla f$.

\begin{lemma}[\cite{lou2018computing, attouch2013convergence}]
\label{lemma:itac_descent}
Suppose the step size satisfies $v \in (0,1/L)$, where
$L=\|A\|_2^2$. For any current iterate $x^k$ and parameter $\lambda_k>0$,
let
\[
z^k=x^k-vA^{\top}(Ax^k-b),
\]
and let $h^{k+1}$ be the deterministic exact proximal selection given
by equation~\eqref{eq:itac_update}. Then the baseline ITAC step satisfies the sufficient
decrease condition
\begin{equation}
\label{eq:itac_descent}
F_k(h^{k+1})
\le
F_k(x^k)
-
c_0\|h^{k+1}-x^k\|_2^2,
\end{equation}
where
\begin{equation}
\label{eq:c0_def}
c_0:=\frac{1-Lv}{2v}>0.
\end{equation}
\end{lemma}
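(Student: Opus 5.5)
The argument is the usual proximal-gradient descent estimate: the exact proximal minimality of $h^{k+1}$ is combined with the descent lemma for the $L$-smooth function $f$. The first step is to recognize $h^{k+1}$ as a global minimizer. By Lemma~\ref{lemma:proximal_l12}, the selection in equation~\eqref{eq:itac_update} with $\tau=v\lambda_k$ lies in $\operatorname{Prox}_{v\lambda_k R}(z^k)$. In the nondegenerate case this is item (i). In the degenerate case $\|\mathcal{S}_{v\lambda_k}(z^k)\|_2=0$ we have $\|z^k\|_\infty\le v\lambda_k$, and the one-sparse vector $z^k_{j_k}e_{j_k}$, with $j_k$ from equation~\eqref{eq:deterministic_index}, meets the characterization in (ii) or (iii): its norm is $\|z^k\|_\infty$ and its sign agrees with $z^k$. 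I take it as given from \cite{lou2018computing} that this is a genuine minimizer and not only a point satisfying the necessary conditions. Hence
\[
h^{k+1}\in\arg\min_{x}\Big\{\tfrac{1}{2v}\|x-z^k\|_2^2+\lambda_k R(x)\Big\}.
\]

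The second step compares this objective at $x=h^{k+1}$ and $x=x^k$. Substitute $z^k=x^k-v\nabla f(x^k)$, where $\nabla f(x)=A^\top(Ax-b)$, and expand the square. The terms $\tfrac v2\|\nabla f(x^k)\|_2^2$ cancel, which gives
\[
\lambda_k R(h^{k+1})+\langle\nabla f(x^k),h^{k+1}-x^k\rangle+\tfrac{1}{2v}\|h^{k+1}-x^k\|_2^2\le \lambda_k R(x^k).
\]

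The third step uses the descent lemma. Since $\nabla f$ is $L$-Lipschitz with $L=\|A\|_2^2$,
\[
f(h^{k+1})\le f(x^k)+\langle\nabla f(x^k),h^{k+1}-x^k\rangle+\tfrac L2\|h^{k+1}-x^k\|_2^2.
\]
For the quadratic $f$ this is in fact an identity with $\tfrac12\|A(h^{k+1}-x^k)\|_2^2$ in place of the last term. Adding this to the inequality from the second step gives
\[
F_k(h^{k+1})\le F_k(x^k)-\Big(\tfrac1{2v}-\tfrac L2\Big)\|h^{k+1}-x^k\|_2^2 .
\]
The coefficient equals $c_0=(1-Lv)/(2v)$, and $c_0>0$ because $v<1/L$. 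This is equation~\eqref{eq:itac_descent}.

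The only delicate point is the first step in the degenerate case, namely confirming that the deterministic tie-broken selection is a global proximal minimizer, since the rest of the argument uses only global minimality. If necessary, I would check this directly. Any nonzero one-sparse $x=te_j$ with $\operatorname{sign}(t)=\operatorname{sign}(z^k_j)$ has $R(x)=0$ and prox objective $\tfrac12\|z^k\|^2-|t||z^k_j|+\tfrac12 t^2$. This is minimized by choosing $|t|=|z^k_j|$ with $j$ maximizing $|z^k_j|$, so $z^k_{j_k}e_{j_k}$ is optimal among one-sparse vectors. That it also beats all denser vectors when $\|z^k\|_\infty\le v\lambda_k$ is exactly the content of Lemma~\ref{lemma:proximal_l12}(ii)--(iii), which forces every minimizer in these regimes to be one-sparse.
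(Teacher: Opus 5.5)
Your proposal is correct. It is the standard proximal-gradient sufficient-decrease argument (global minimality of the exact prox selection, plus the descent lemma for $f$) that the paper invokes by citing \cite{lou2018computing, attouch2013convergence} without writing out a proof. One small slip in your fallback check: when several entries tie at $\|z^k\|_\infty=v\lambda_k$, Lemma~\ref{lemma:proximal_l12}(ii) does not force minimizers to be one-sparse. The selection is still a global minimizer, since for $\|z^k\|_\infty\le v\lambda_k$ the bound $\tfrac12\|x-z^k\|_2^2+v\lambda_k R(x)\ge \tfrac12\|z^k\|_2^2-\tfrac12\|z^k\|_\infty^2$ holds for every $x$ (use $\langle x,z^k\rangle\le\|z^k\|_\infty\|x\|_1$ and $\|x\|_1\ge\|x\|_2$), and $z^k_{j_k}e_{j_k}$ attains this value.
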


We combine the compactness of the lower level sets with the descent
property of the proximal thresholding step to derive the basic descent and
boundedness properties of ITP-C. When the continuation parameter reaches its terminal value, we write
\(
    F_{\rm tar}(x):=
    \frac12\|Ax-b\|_2^2
    +\lambda_{ tar}(\|x\|_1-\|x\|_2)
\)
for the target objective.

\begin{theorem}
\label{thm:sufficient_decrease}
Assume that $F_{\rm tar}$ is coercive and that the step size satisfies
$v\in(0,1/L)$, where $L=\|A\|_2^2$. Then the sequence $\{x^k\}$ generated by
Algorithm~\ref{alg:itp_c} satisfies:
\begin{itemize}
    \item[(i)] The dynamic objective sequence satisfies $F_{k+1}(x^{k+1})\le F_k(x^k)$ for all $k\ge0$.
    \item[(ii)] There exists a constant $c_0>0$ such that
    \begin{equation}
        \label{eq:thm1_decrease}
        F_{k+1}(x^{k+1})
        \le
        F_k(x^k)
        -
        c_0\|h^{k+1}-x^k\|_2^2,
        \qquad \forall k\ge 0.
    \end{equation}
    \item[(iii)] The proximal displacement vanishes, i.e., $\lim_{k\to\infty}\|h^{k+1}-x^k\|_2=0$.
    \item[(iv)] Both $\{x^k\}$ and $\{h^k\}$ are bounded.
\end{itemize}
\end{theorem}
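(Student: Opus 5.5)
The argument chains three inequalities at each iteration and then telescopes.

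\emph{Step 1: chaining at a fixed $k$.} By the descent check \eqref{eq:descent_check}, $F_k(x^{k+1})\le F_k(h^{k+1})$ in both branches. If the pursuit candidate is accepted, the inequality is strict. If $|T^{k+1}|\ge m$, then $x_p^{k+1}=h^{k+1}$ and we get equality. Combining this with Lemma~\ref{lemma:itac_descent} gives
\[
F_k(x^{k+1})\le F_k(h^{k+1})\le F_k(x^k)-c_0\|h^{k+1}-x^k\|_2^2 .
\]

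\emph{Step 2: passing from $F_k$ to $F_{k+1}$.} Since $\lambda_{k+1}\le\lambda_k$ and $R(x)=\|x\|_1-\|x\|_2\ge0$, we have $F_{k+1}(x)=f(x)+\lambda_{k+1}R(x)\le F_k(x)$ for every $x$. Applying this at $x=x^{k+1}$ yields (ii), with the constant $c_0$ from \eqref{eq:c0_def}. Statement (i) follows immediately from (ii).

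\emph{Step 3: summability.} Every $F_k$ is bounded below by $0$. Summing (ii) over $k=0,\dots,N$ gives
\[
c_0\sum_{k=0}^N\|h^{k+1}-x^k\|_2^2\le F_0(x^0)=\tfrac12\|b\|_2^2 .
\]
Hence the series converges, and (iii) follows.

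\emph{Step 4: boundedness, the only delicate step.} The issue is that the objectives change with $k$, so we cannot use a level set of a single function directly. Here we use $\lambda_k\ge\lambda_{\rm tar}$ together with $R\ge0$, which gives $F_{\rm tar}(x)\le F_k(x)$ for all $x$. Then, by (i),
\[
F_{\rm tar}(x^k)\le F_k(x^k)\le F_0(x^0)=\tfrac12\|b\|_2^2 .
\]
So every $x^k$ lies in the lower level set $\{x:F_{\rm tar}(x)\le\frac12\|b\|_2^2\}$. This set is compact because $F_{\rm tar}$ is coercive (by assumption, or via Proposition~\ref{prop:coercivity}) and continuous. Hence $\{x^k\}$ is bounded.

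For $\{h^k\}$ there are two routes. The first uses the same level-set argument, since
\[
F_{\rm tar}(h^{k+1})\le F_k(h^{k+1})\le F_k(x^k)\le\tfrac12\|b\|_2^2 .
\]
The second is to write $\|h^{k+1}\|_2\le\|x^k\|_2+\|h^{k+1}-x^k\|_2$ and note that the displacement is bounded by (iii). Either route gives (iv).
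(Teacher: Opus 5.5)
Your proposal is correct and follows essentially the same route as the paper: you chain $F_{k+1}(x^{k+1})\le F_k(x^{k+1})$ (from $\lambda_{k+1}\le\lambda_k$ and $R\ge 0$), the descent-check inequality $F_k(x^{k+1})\le F_k(h^{k+1})$, and Lemma~\ref{lemma:itac_descent}, then telescope, and bound the iterates using a lower level set of the coercive $F_{\rm tar}$. The only minor differences are in your favor. You bound $F_{N+1}(x^{N+1})$ below by $0$ directly instead of invoking coercivity of $F_{\rm tar}$, which is slightly cleaner, and you give an alternative level-set argument for the boundedness of $\{h^k\}$.
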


\begin{proof}
Since $R(x)=\|x\|_1-\|x\|_2\ge0$ and the continuation parameter is nonincreasing, $\lambda_{k+1}\le\lambda_k$, we have
\begin{equation}
\label{eq:proof_lambda_drop}
F_{k+1}(x^{k+1})
=
f(x^{k+1})+\lambda_{k+1}R(x^{k+1})
\le
f(x^{k+1})+\lambda_kR(x^{k+1})
=
F_k(x^{k+1}).
\end{equation}
By the strict descent check in equation~\eqref{eq:descent_check}, either the pursuit
candidate is accepted with a strict improvement over the proximal candidate, or the proximal candidate is retained. Therefore, $F_k(x^{k+1})\le F_k(h^{k+1})$.
Combining this inequality with equation~\eqref{eq:proof_lambda_drop} and Lemma
\ref{lemma:itac_descent} yields
\begin{equation}
\label{eq:thm1_chain}
F_{k+1}(x^{k+1})
\le
F_k(x^{k+1})
\le
F_k(h^{k+1})
\le
F_k(x^k)-c_0\|h^{k+1}-x^k\|_2^2,
\end{equation}
which proves equation~\eqref{eq:thm1_decrease} and the monotonicity property.
Summing equation~\eqref{eq:thm1_decrease} from $k=0$ to $N$ gives
\[
c_0\sum_{k=0}^{N}\|h^{k+1}-x^k\|_2^2
\le
F_0(x^0)-F_{N+1}(x^{N+1}).
\]
Since $F_{\rm tar}$ is coercive and bounded from below, and $F_{N+1}(x^{N+1})\ge F_{\rm tar}(x^{N+1})>\inf_x F_{\rm tar}(x)>-\infty$, the series $\sum_{k=0}^{\infty}\|h^{k+1}-x^k\|_2^2$ is finite. Hence $\|h^{k+1}-x^k\|_2\to0$.

Finally, monotonicity gives $F_{\rm tar}(x^k)\le F_k(x^k)\le F_0(x^0)$. Thus $\{x^k\}$ lies in a bounded lower level set of the coercive function $F_{\rm tar}$, and hence $\{x^k\}$ is bounded. Since $\|h^{k+1}-x^k\|_2\to0$, the sequence $\{h^{k}\}$ is also bounded.
\end{proof}



Theorem~\ref{thm:sufficient_decrease} shows that the descent check preserves
the monotonicity of the dynamic objective. We next show that, after the
continuation parameter reaches its terminal value, the descent-checked pursuit
step can be accepted only finitely many times. Indeed, the restricted
least-squares pursuit step is optimal for the data-fitting term on the selected
support, but it is not necessarily optimal for the full
$\ell_{1-2}$-regularized objective $F_{\rm tar}$. Since there are only
finitely many possible supports, the strict descent check prevents infinitely
many accepted pursuit steps. The next proposition formalizes this
finite-acceptance property.

\begin{proposition}
\label{prop:finite_pursuit_acceptance}
Assume that the strict descent check in equation~\eqref{eq:descent_check} is used and that
the continuation parameter satisfies $\lambda_k=\lambda_{\rm tar}$ for all
sufficiently large $k$. Then the number of accepted pursuit steps is
finite. Consequently, there exists an index $K_{pc}$ such that, for all $k\ge K_{pc}$, the algorithm takes the update $x^{k+1}=h^{k+1}$.
\end{proposition}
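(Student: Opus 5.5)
The plan is to exploit two facts. First, once the continuation has stopped, the dynamic objective no longer changes. Second, the pursuit candidate is a deterministic function of the support alone. Let $K_0$ be such that $\lambda_k=\lambda_{\rm tar}$ for all $k\ge K_0$. Then $F_k=F_{\rm tar}$ for $k\ge K_0$, and there are at most $K_0$ accepted pursuit steps before $K_0$. So it suffices to bound the number of accepted pursuit steps with $k\ge K_0$.

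\textbf{Step 1: the pursuit candidate depends only on the support.} For any $T\subseteq\{1,\dots,n\}$ with $|T|<m$, define $P(T)\in\mathbb{R}^n$ by $P(T)_T=A_T^\dagger b$ and $P(T)_{T^c}=0$. Since the Moore--Penrose pseudoinverse is single-valued, $P(T)$ is uniquely determined by $T$, even when $A_T$ is rank deficient. Hence $x_p^{k+1}=P(T^{k+1})$ whenever $|T^{k+1}|<m$. When $|T^{k+1}|\ge m$, Algorithm~\ref{alg:itp_c} sets $x_p^{k+1}=h^{k+1}$, and the strict test $F_k(x_p^{k+1})<F_k(h^{k+1})$ fails trivially. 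So acceptance is only possible in the first case. Consequently, every accepted pursuit iterate $x^{k+1}$ belongs to the finite set $\{P(T): T\subseteq\{1,\dots,n\},\ |T|<m\}$, which has at most $2^n$ elements.

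\textbf{Step 2: a given support can be accepted at most once after $K_0$.} For $k\ge K_0$, the chain in the proof of Theorem~\ref{thm:sufficient_decrease} (equation~\eqref{eq:thm1_chain}) with $F_k=F_{k+1}=F_{\rm tar}$ gives
\[
F_{\rm tar}(x^{k+1})\le F_{\rm tar}(h^{k+1})\le F_{\rm tar}(x^k).
\]
Thus $\{F_{\rm tar}(x^k)\}_{k\ge K_0}$ is nonincreasing. If the pursuit candidate is accepted at iteration $k$, the strict check upgrades this to $F_{\rm tar}(x^{k+1})<F_{\rm tar}(x^k)$.

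Suppose two accepted steps at iterations $K_0\le k_1<k_2$ share the same support $T$. Then $x^{k_1+1}=x^{k_2+1}=P(T)$. Using the strict inequality at $k_2$ and then monotonicity from $k_1+1$ to $k_2$, we get
\[
F_{\rm tar}(P(T))=F_{\rm tar}(x^{k_2+1})<F_{\rm tar}(x^{k_2})\le F_{\rm tar}(x^{k_1+1})=F_{\rm tar}(P(T)),
\]
which is a contradiction. Hence each support yields at most one accepted pursuit step after $K_0$.

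\textbf{Step 3: conclusion.} The number of accepted pursuit steps is at most $K_0+2^n$, which is finite. Let $K_{pc}$ exceed the index of the last accepted pursuit step (or set $K_{pc}=K_0$ if none occurs after $K_0$). For all $k\ge K_{pc}$, the descent check then rejects the pursuit candidate, so the algorithm takes $x^{k+1}=h^{k+1}$, as claimed.

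\textbf{Main obstacle.} The only delicate point is the comparison in Step 2. It uses a single fixed objective, which requires $k_1\ge K_0$, and it needs the candidate to be exactly reproducible from the support. The first requirement is why the finite count of pre-$K_0$ acceptances is handled separately. The second is why I stress the single-valuedness of $A_T^\dagger$ and that the $|T|\ge m$ branch can never produce an acceptance. No RIP or full-rank assumption is needed for this argument.
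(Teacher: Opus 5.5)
Your proof is correct and follows essentially the same route as the paper. Both arguments use the finitely many Moore--Penrose candidates, the strict decrease of $F_{\rm tar}$ at each accepted step once $\lambda_k=\lambda_{\rm tar}$, and monotonicity to rule out a repeat. The differences are minor: you count repeats by support rather than by objective value, and you explicitly rule out acceptance in the $|T^{k+1}|\ge m$ branch, which the paper leaves implicit.
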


\begin{proof}
Since the continuation parameter reaches the terminal value after finitely many
iterations, there exists $K_{\rm tar}>0$ such that $F_k\equiv F_{\rm tar}$ for all $k\ge K_{\rm tar}$. For $k\ge K_{\rm tar}$, the sufficient decrease property of the ITAC candidate gives $F_{\rm tar}(h^{k+1})\le F_{\rm tar}(x^k)$.
If the pursuit candidate is accepted at iteration $k$, then by the strict
descent check,
\[
F_{\rm tar}(x_p^{k+1})
<
F_{\rm tar}(h^{k+1})
\le
F_{\rm tar}(x^k).
\]
Thus every accepted pursuit step produces a strict decrease of the target
objective value.

Next, observe that every pursuit candidate is obtained by solving a restricted
least-squares problem on a support set selected from
$\{1,\ldots,n\}$. Since there are only finitely many support sets, all possible
deterministic Moore--Penrose least-squares candidates belong to the finite set
\[
\mathcal{V}_{LS}
:=
\left\{
x\in\mathbb{R}^n:
x_T=A_T^\dagger b,\;
x_{T^c}=0,\;
T\subseteq\{1,\ldots,n\}
\right\}.
\]
Therefore, the set of possible objective values generated by accepted pursuit
candidates,
\[
F_{\rm tar}(\mathcal{V}_{LS})
:=
\{F_{\rm tar}(x):x\in\mathcal{V}_{LS}\},
\]
is finite.
The same objective value in $F_{\rm tar}(\mathcal{V}_{LS})$ cannot be accepted twice after $K_{\rm tar}$. Indeed, suppose that a pursuit step is accepted at iteration $k_1\ge K_{\rm tar}$ and produces $F_{\rm tar}(x^{k_1+1})=\alpha$. By monotonicity of the target objective on the tail sequence, $F_{\rm tar}(x^k)\le\alpha$ for any later iteration $k\ge k_1+1$. If a later pursuit step at some $k_2\ge k_1+1$ were accepted with the same objective value $\alpha$, then
\[
\alpha
=
F_{\rm tar}(x_p^{k_2+1})
<
F_{\rm tar}(h^{k_2+1})
\le
F_{\rm tar}(x^{k_2})
\le
\alpha,
\]
which is impossible. Hence each value in the finite set
$F_{\rm tar}(\mathcal{V}_{LS})$ can be accepted at most once.

Therefore, only finitely many pursuit steps can be accepted after
$K_{\rm tar}$. Since there are also only finitely many iterations before
$K_{\rm tar}$, the total number of accepted pursuit steps is finite. Hence
there exists $K_{p}$ such that for all $k\ge K_{p}$, the pursuit candidate is not accepted and the algorithm takes the update $x^{k+1}=h^{k+1}$. This completes the proof.
\end{proof}
\subsection{Subsequence convergence to stationarity}


Having established sufficient decrease and boundedness, we next study the
stationarity of accumulation points. Since the $\ell_{1-2}$ objective is
nonsmooth and nonconvex, criticality is characterized by the limiting
subdifferential.

\begin{lemma}
\label{lemma:relative_error}
Let $L=\|A\|_2^2$ be the Lipschitz constant of $\nabla f$. For any iteration
$k$, let $h^{k+1}$ be the deterministic exact proximal selection
generated by equation~\eqref{eq:itac_update}. Then there exists $\xi^{k+1}\in \partial F_k(h^{k+1})$ such that
\begin{equation}
\label{eq:relative_error_bound}
\|\xi^{k+1}\|_2
\le
\left(L+\frac{1}{v}\right)
\|h^{k+1}-x^k\|_2 .
\end{equation}
In particular, for all $k\ge K_{tar}$, where $\lambda_k=\lambda_{\rm tar}$, we have $\xi^{k+1}\in \partial F_{\rm tar}(h^{k+1})$.
\end{lemma}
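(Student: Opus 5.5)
The plan is to read the bound off the first-order optimality condition of the proximal subproblem that defines $h^{k+1}$, and then add and subtract $\nabla f$.

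\emph{Step 1 (the selection is a global minimizer).} By the definition of the proximal mapping, $h^{k+1}$ should be a global minimizer of
\[
\Phi_k(x):=\frac{1}{2v}\|x-z^k\|_2^2+\lambda_k R(x).
\]
We rely on the exact characterization of Lemma~\ref{lemma:proximal_l12}, taken from \cite{lou2018computing}, and check that the deterministic choice in \eqref{eq:itac_update} lies in $\operatorname{Prox}_{v\lambda_k R}(z^k)$.
\begin{itemize}
\item[(a)] In the nondegenerate case $\|\mathcal{S}_{v\lambda_k}(z^k)\|_2>0$ we have $\|z^k\|_\infty>v\lambda_k$, and the formula is exactly case (i).
\item[(b)] In the degenerate case $\|\mathcal{S}_{v\lambda_k}(z^k)\|_2=0$ we have $\|z^k\|_\infty\le v\lambda_k$. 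The vector $z^k_{j_k}e_{j_k}$ is one-sparse, has norm $\|z^k\|_\infty$, is sign-consistent with $z^k$, and is supported on a maximal-modulus index. These are precisely the minimizers listed in case (iii). In case (ii), where $\|z^k\|_\infty=v\lambda_k$, they match the boundary description with $\|x^*\|_2=v\lambda_k$.
\end{itemize}
For the degenerate branch we need that the conditions in \cite{lou2018computing} describe the whole solution set, not only necessary conditions for membership. Making this precise is the step I expect to need the most care.

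\emph{Step 2 (Fermat's rule and the sum rule).} Since $h^{k+1}$ globally minimizes $\Phi_k$, the generalized Fermat rule gives $0\in\partial\Phi_k(h^{k+1})$. The quadratic term is $C^1$, so the smooth-plus-nonsmooth sum rule for limiting subdifferentials \cite[Exercise~8.8]{rockafellar1998variational} yields
\[
\frac{1}{v}\bigl(z^k-h^{k+1}\bigr)\in\lambda_k\,\partial R(h^{k+1}).
\]
Here we also use $\partial(\lambda_k R)=\lambda_k\partial R$ for $\lambda_k>0$. Because $f$ is $C^1$, the same sum rule gives $\partial F_k(h^{k+1})=\nabla f(h^{k+1})+\lambda_k\partial R(h^{k+1})$. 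We therefore define
\[
\xi^{k+1}:=\nabla f(h^{k+1})+\frac{1}{v}\bigl(z^k-h^{k+1}\bigr)\in\partial F_k(h^{k+1}).
\]

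\emph{Step 3 (the estimate).} Substituting $z^k=x^k-v\nabla f(x^k)$ gives
\[
\xi^{k+1}=\nabla f(h^{k+1})-\nabla f(x^k)+\frac{1}{v}\bigl(x^k-h^{k+1}\bigr).
\]
The triangle inequality and the $L$-Lipschitz continuity of $\nabla f=A^\top(A\cdot-b)$, with $L=\|A\|_2^2$, then give
\[
\|\xi^{k+1}\|_2\le\left(L+\frac1v\right)\|h^{k+1}-x^k\|_2,
\]
which is \eqref{eq:relative_error_bound}.

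\emph{Final claim.} For $k\ge K_{\rm tar}$ we have $\lambda_k=\lambda_{\rm tar}$, so $F_k\equiv F_{\rm tar}$. Hence $\xi^{k+1}\in\partial F_{\rm tar}(h^{k+1})$.
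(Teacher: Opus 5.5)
Your proposal is correct and follows the paper's proof essentially line by line. Both use the same optimality condition $\frac1v(x^k-h^{k+1})-\nabla f(x^k)\in\lambda_k\partial R(h^{k+1})$, the same $\xi^{k+1}$ obtained from the smooth-plus-nonsmooth sum rule, and the same Lipschitz estimate. Your Step 1 only makes explicit what the paper assumes by calling \eqref{eq:itac_update} an exact proximal selection. The sufficiency issue you flag closes easily: a minimizer exists by coercivity, and in the degenerate cases every vector satisfying the necessary conditions of Lemma~\ref{lemma:proximal_l12} gives the proximal objective $\frac12\|x-z^k\|_2^2+v\lambda_kR(x)$ the same value $\frac12(\|z^k\|_2^2-\|z^k\|_\infty^2)$, so all of them, including $z^k_{j_k}e_{j_k}$, are minimizers.
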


\begin{proof}
By the proximal optimality condition of the ITAC step,
\[
h^{k+1}
\in
\operatorname{Prox}_{v\lambda_k R}
\left(x^k-v\nabla f(x^k)\right),
\]
we have
\[
0
\in
h^{k+1}-x^k+v\nabla f(x^k)
+
v\lambda_k\partial R(h^{k+1}).
\]
Equivalently,
\[
\frac{1}{v}(x^k-h^{k+1})-\nabla f(x^k)
\in
\lambda_k\partial R(h^{k+1}).
\]
Define
\[
\xi^{k+1}
:=
\nabla f(h^{k+1})
-
\nabla f(x^k)
+
\frac{1}{v}(x^k-h^{k+1}).
\]
Using the subdifferential sum rule $\partial F_k(x)=\nabla f(x)+\lambda_k\partial R(x)$, we obtain $\xi^{k+1}\in \partial F_k(h^{k+1})$.
Moreover, by the $L$-Lipschitz continuity of $\nabla f$,
\[
\begin{aligned}
\|\xi^{k+1}\|_2
&\le
\|\nabla f(h^{k+1})-\nabla f(x^k)\|_2
+
\frac{1}{v}\|x^k-h^{k+1}\|_2  \\
&\le
\left(L+\frac{1}{v}\right)
\|h^{k+1}-x^k\|_2 .
\end{aligned}
\]
This proves equation~\eqref{eq:relative_error_bound}. When $k\ge K_{\rm tar}$, we have $\lambda_k=\lambda_{\rm tar}$ and hence $F_k=F_{\rm tar}$, which gives $\xi^{k+1}\in \partial F_{\rm tar}(h^{k+1})$.
\end{proof}

Lemma~\ref{lemma:relative_error} shows that the stationarity residual of the
proximal candidate is controlled by the proximal displacement. Together with
Theorem~\ref{thm:sufficient_decrease}, this implies that the residual vanishes
along any convergent subsequence. We now use this fact to identify the limit
points of the generated sequence.

\begin{theorem}
\label{thm:subsequence_convergence}
Suppose the conditions in Theorem~\ref{thm:sufficient_decrease} hold. Let
$\{x^k\}$ be the sequence generated by Algorithm~\ref{alg:itp_c}. Then the
sequence is bounded, and every accumulation point $x^*$ of $\{x^k\}$ is a
critical point of the target objective $F_{\rm tar}$, namely $0\in \partial F_{\rm tar}(x^*)$.
\end{theorem}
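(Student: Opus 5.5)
Boundedness is already part of Theorem~\ref{thm:sufficient_decrease}(iv), so the real task is criticality of accumulation points. I would work only with the tail $k\ge K_{\rm tar}$, where $\lambda_k=\lambda_{\rm tar}$ and $F_k\equiv F_{\rm tar}$; continuation stops after finitely many steps because $\lambda_k=\max\{\lambda_0\gamma^k,\lambda_{\rm tar}\}$. Let $x^*$ be an accumulation point and pick a subsequence $x^{k_j}\to x^*$ with $k_j\ge K_{\rm tar}$. By Theorem~\ref{thm:sufficient_decrease}(iii), $\|h^{k_j+1}-x^{k_j}\|_2\to0$, hence also $h^{k_j+1}\to x^*$.

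The key point is to pass to the limit using the proximal candidates $h^{k_j+1}$, not the iterates $x^{k_j+1}$, which may be pursuit points. This way, neither Proposition~\ref{prop:finite_pursuit_acceptance} nor any control of $x^{k+1}-h^{k+1}$ is needed. Lemma~\ref{lemma:relative_error} supplies $\xi^{k_j+1}\in\partial F_{\rm tar}(h^{k_j+1})$ with $\|\xi^{k_j+1}\|_2\le(L+1/v)\|h^{k_j+1}-x^{k_j}\|_2\to0$.

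To close via the definition of the limiting subdifferential, I also need $F_{\rm tar}(h^{k_j+1})\to F_{\rm tar}(x^*)$. This is immediate because $F_{\rm tar}$ is finite-valued and continuous, so no $F$-attentive convergence issue arises. Since the limiting subdifferential graph is closed under $h\to x^*$, $F(h)\to F(x^*)$ and $\xi\to0$, we get $0\in\partial F_{\rm tar}(x^*)$.

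The main obstacle is rigor about $\partial F_{\rm tar}$. Lemma~\ref{lemma:relative_error} uses a sum rule, and one must check that the $\xi$ it produces lies in the limiting subdifferential. This holds because $f$ is smooth, so $\partial(f+\lambda R)=\nabla f+\lambda\partial R$ holds exactly for limiting subdifferentials, and the proximal optimality condition yields a Fréchet (hence limiting) subgradient of $\lambda R$ at $h^{k+1}$. Given that, closedness of the limiting subdifferential under continuous values is exactly its definition (after a diagonal argument, if needed, to replace limiting by Fréchet subgradients). If one prefers working with $x^{k+1}$, an alternative is to invoke Proposition~\ref{prop:finite_pursuit_acceptance} so that $x^{k+1}=h^{k+1}$ eventually. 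The $h$-based argument avoids this and is cleaner.
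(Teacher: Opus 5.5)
Your proposal is correct and follows essentially the same route as the paper. You obtain boundedness from Theorem~\ref{thm:sufficient_decrease}(iv), pass to the limit along the proximal candidates $h^{k_j+1}\to x^*$ using the residual $\xi^{k_j+1}\in\partial F_{\rm tar}(h^{k_j+1})$ from Lemma~\ref{lemma:relative_error}, and close with continuity of $F_{\rm tar}$ and closedness of the limiting subdifferential, all without Proposition~\ref{prop:finite_pursuit_acceptance}; your added remark that the global proximal minimization gives $\xi^{k_j+1}\in\widehat{\partial}F_{\rm tar}(h^{k_j+1})$, so no diagonal argument is needed, soundly justifies a step the paper leaves to a citation.
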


\begin{proof}
By Theorem~\ref{thm:sufficient_decrease}, the sequence $\{x^k\}$ is bounded.
Hence, by the Bolzano--Weierstrass theorem, there exists a convergent
subsequence $\{x^{k_j}\}$ such that $x^{k_j}\to x^*$. Theorem~\ref{thm:sufficient_decrease}(iii) gives $\|h^{k+1}-x^k\|_2\to0$, and therefore $h^{k_j+1}\to x^*$.

For sufficiently large $j$, we have $k_j\ge K_{tar}$ and hence
$F_{k_j}\equiv F_{\rm tar}$. By Lemma \ref{lemma:relative_error}, there exists $\xi^{k_j+1}\in \partial F_{\rm tar}(h^{k_j+1})$ such that $\|\xi^{k_j+1}\|_2\le (L+1/v)\|h^{k_j+1}-x^{k_j}\|_2$. Thus $\xi^{k_j+1}\to0$.

Since $F_{\rm tar}$ is continuous, the convergence $h^{k_j+1}\to x^*$ implies $F_{\rm tar}(h^{k_j+1})\to F_{\rm tar}(x^*)$. Using the closedness property of the limiting subdifferential under function-value convergence \cite[Theorem 8.6]{rockafellar1998variational}, we obtain from $(h^{k_j+1},\xi^{k_j+1})\in\operatorname{gph}(\partial F_{\rm tar})$ that $0\in \partial F_{\rm tar}(x^*)$.
Therefore, every accumulation point of $\{x^k\}$ is a critical point of
$F_{\rm tar}$.
\end{proof}

\subsection{Global convergence via the K\L{} property}

The preceding subsection establishes subsequential stationarity: every
accumulation point of the generated sequence is a critical point of
$F_{\rm tar}$. It remains to prove full-sequence convergence. To this end, we
combine the sufficient decrease property, the relative-error bound, the finite
acceptance of pursuit corrections, and the K\L{} property of $F_{\rm tar}$, which
follows from Lemma~\ref{lemma:kl_property}.

\begin{theorem}
\label{thm:global_convergence}
Suppose the conditions in Theorem~\ref{thm:sufficient_decrease} hold and the
strict descent check in equation~\eqref{eq:descent_check} is used. Then the sequence
$\{x^k\}$ generated by Algorithm~\ref{alg:itp_c} has the finite-length
property:
\[
\sum_{k=0}^{\infty}\|x^{k+1}-x^k\|_2<\infty.
\]
Consequently, $\{x^k\}$ is a Cauchy sequence and converges to a critical point
$x^*$ of the target objective
\[
F_{\rm tar}(x)=f(x)+\lambda_{\rm tar}R(x).
\]
\end{theorem}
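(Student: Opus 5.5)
The plan is to reduce to the tail regime, where the algorithm is a pure proximal-gradient (ITAC) iteration on the fixed function $F_{\rm tar}$, and then run the standard Attouch--Bolte--Svaiter KŁ argument. By the continuation rule, $\lambda_k=\lambda_{\rm tar}$ for all $k\ge K_{\rm tar}$. Proposition~\ref{prop:finite_pursuit_acceptance} then gives an index $K_{pc}$ such that $x^{k+1}=h^{k+1}$ for all $k\ge K:=\max\{K_{\rm tar},K_{pc}\}$. Since the first $K$ terms contribute a finite amount to $\sum_k\|x^{k+1}-x^k\|_2$, it suffices to prove finite length for the tail $k\ge K$. On this tail, Theorem~\ref{thm:sufficient_decrease}(ii) becomes the sufficient decrease
\[
F_{\rm tar}(x^{k+1})\le F_{\rm tar}(x^k)-c_0\|x^{k+1}-x^k\|_2^2 .
\]
Lemma~\ref{lemma:relative_error} becomes a relative error bound: there is $\xi^{k+1}\in\partial F_{\rm tar}(x^{k+1})$ with $\|\xi^{k+1}\|_2\le (L+1/v)\|x^{k+1}-x^k\|_2$.

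Next I collect the limit-set facts. By Theorem~\ref{thm:sufficient_decrease}(iv) the sequence is bounded, so its set of accumulation points $\Omega$ is nonempty and compact. Because $\|x^{k+1}-x^k\|_2\to0$, $\Omega$ is also connected. By Theorem~\ref{thm:subsequence_convergence}, $\Omega$ consists of critical points of $F_{\rm tar}$. The values $F_{\rm tar}(x^k)$ are nonincreasing on the tail and bounded below, so they converge to some $F^*$. Continuity of $F_{\rm tar}$ then gives $F_{\rm tar}\equiv F^*$ on $\Omega$.

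There are two cases. If $F_{\rm tar}(x^{k_0})=F^*$ for some $k_0\ge K$, sufficient decrease forces $x^{k+1}=x^k$ from then on, and we are done. Otherwise $F_{\rm tar}(x^k)>F^*$ for all $k$. By Lemma~\ref{lemma:kl_property}, $F_{\rm tar}$ has the KŁ property at every point of $\Omega$. The uniformized KŁ lemma of \cite{bolte2014proximal} then gives $\varepsilon,\eta>0$ and a single concave $\phi$ such that, for all large $k$ (when $\operatorname{dist}(x^k,\Omega)<\varepsilon$ and $F^*<F_{\rm tar}(x^k)<F^*+\eta$),
\[
\phi'(F_{\rm tar}(x^k)-F^*)\operatorname{dist}(0,\partial F_{\rm tar}(x^k))\ge1 .
\]

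The main step is to combine these inequalities. Write $\Delta_k:=\phi(F_{\rm tar}(x^k)-F^*)-\phi(F_{\rm tar}(x^{k+1})-F^*)$. Concavity of $\phi$ gives
\[
\Delta_k\ge\phi'(F_{\rm tar}(x^k)-F^*)\bigl(F_{\rm tar}(x^k)-F_{\rm tar}(x^{k+1})\bigr).
\]
Bound the last factor below by $c_0\|x^{k+1}-x^k\|_2^2$. Bound $\phi'(F_{\rm tar}(x^k)-F^*)$ below by $1/\|\xi^k\|_2\ge 1/\bigl((L+1/v)\|x^k-x^{k-1}\|_2\bigr)$. This yields
\[
\|x^{k+1}-x^k\|_2^2\le C\,\Delta_k\|x^k-x^{k-1}\|_2,\qquad C=(L+1/v)/c_0 .
\]
By AM--GM this gives $2\|x^{k+1}-x^k\|_2\le\|x^k-x^{k-1}\|_2+C\Delta_k$. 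The $\Delta_k$ telescope, so summing over $k$ gives $\sum_k\|x^{k+1}-x^k\|_2<\infty$. Finite length means $\{x^k\}$ is Cauchy and hence convergent. Its limit lies in $\Omega$, so it is a critical point of $F_{\rm tar}$ by Theorem~\ref{thm:subsequence_convergence}.

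The main obstacle I expect is the index bookkeeping. The relative error bound is indexed through $h^{k}$ and $x^{k-1}$, so it applies at $x^k$ only when $k-1\ge K$, that is, when $x^k=h^k$. That is exactly why Proposition~\ref{prop:finite_pursuit_acceptance} must be invoked first: without it, an accepted pursuit step would break the link between the point where the KŁ inequality is evaluated and the point where a small subgradient is available.
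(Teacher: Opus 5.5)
Your proposal is correct and follows essentially the same route as the paper's proof. You pass to the tail where both $\lambda_k=\lambda_{\rm tar}$ and, by Proposition~\ref{prop:finite_pursuit_acceptance}, $x^{k+1}=h^{k+1}$, then run the standard K\L{} argument: the uniformized K\L{} property on the compact limit set, concavity of $\phi$, the AM--GM step $2d_k\le d_{k-1}+C\Delta_k$, and telescoping. Your treatment of the degenerate case $F_{\rm tar}(x^{k_0})=F^*$ and of the index shift (using the relative-error bound at $x^k$ only for $k-1\ge K$) matches the paper too; the connectedness of $\Omega$ that you mention is true but not needed.
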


\begin{proof}
By Proposition~\ref{prop:finite_pursuit_acceptance}, there exists \(K_{p}\) such that, for all \(k\ge K_{p}\), the pursuit candidate is not accepted and hence \(x^{k+1}=h^{k+1}\). Since the continuation parameter reaches its terminal value after finitely many iterations, by increasing \(K_{p}\) if necessary, we may assume that \(\lambda_k=\lambda_{\rm tar}\) and \(F_k\equiv F_{\rm tar}\) for all \(k\ge K_{p}\). For simplicity, write \(F:=F_{\rm tar}\) and \(d_k:=\|x^{k+1}-x^k\|_2\). Then, for all \(k\ge K_{p}\), the sufficient decrease inequality gives
\[
F(x^{k+1})\le F(x^k)-c_0 d_k^2 ,
\]
where \(c_0>0\). Moreover, by Lemma~\ref{lemma:relative_error}, there exists \(\xi^{k+1}\in \partial F(x^{k+1})\) such that \(\|\xi^{k+1}\|_2\le C d_k\), where \(C:=L+1/v\).

The sequence \(\{F(x^k)\}_{k\ge K_{p}}\) is nonincreasing. Since \(F\) is bounded from below on the bounded sequence \(\{x^k\}\), there exists \(\zeta\in\mathbb R\) such that \(F(x^k)\to \zeta\). Also, summing the sufficient decrease inequality gives \(\sum_{k=K_{p}}^\infty d_k^2<\infty\), and hence \(d_k\to0\).

Let \(\Omega\) be the set of accumulation points of \(\{x^k\}\). Since \(\{x^k\}\) is bounded, \(\Omega\) is nonempty and compact. By the continuity of \(F\) and the convergence \(F(x^k)\to\zeta\), we have \(F\equiv \zeta\) on \(\Omega\). Since \(F\) is a K\L{} function, the uniformized K\L{} property on the compact set \(\Omega\) implies that there exist \(\eta>0\), a neighborhood \(U\) of \(\Omega\), and a concave function \(\phi\) with \(\phi(0)=0\) and \(\phi'>0\), such that
\[
\phi'(F(x)-\zeta)\,{\rm dist}(0,\partial F(x))\ge 1
\]
whenever \(x\in U\) and \(\zeta<F(x)<\zeta+\eta\). Since \(F(x^k)\to \zeta\) and \({\rm dist}(x^k,\Omega)\to0\), there exists \(K\ge K_{p}+1\) such that the above K\L{} inequality holds at \(x^k\) for all \(k\ge K\), unless \(F(x^k)=\zeta\).

If \(F(x^{k_0})=\zeta\) for some \(k_0\ge K\), then the monotonicity of \(F(x^k)\) gives \(F(x^k)=\zeta\) for all \(k\ge k_0\). The sufficient decrease inequality then yields \(d_k=0\) for all \(k\ge k_0\), and the finite-length property follows immediately. Hence we only need to consider the case \(h_k:=F(x^k)-\zeta>0\) for all sufficiently large \(k\).

For \(k\ge K\), the relative-error bound at \(x^k\) gives \({\rm dist}(0,\partial F(x^k))\le C d_{k-1}\). Therefore, the K\L{} inequality implies \(\phi'(h_k)\ge 1/(C d_{k-1})\). By the concavity of \(\phi\), we have \(\phi(h_k)-\phi(h_{k+1})\ge \phi'(h_k)(h_k-h_{k+1})\). Combining this with \(h_k-h_{k+1}=F(x^k)-F(x^{k+1})\ge c_0 d_k^2\), we obtain
\[
\phi(h_k)-\phi(h_{k+1})\ge \frac{c_0}{C}\frac{d_k^2}{d_{k-1}} .
\]
Let \(\Delta_k:=\phi(h_k)-\phi(h_{k+1})\). Then \(d_k^2\le (C/c_0)\Delta_k d_{k-1}\). Using \(2\sqrt{ab}\le a+b\), we get
\[
d_k\le \frac12 d_{k-1}+\frac{C}{2c_0}\Delta_k .
\]
Summing this inequality from \(k=K\) to \(N\) gives
\[
\sum_{k=K}^N d_k
\le \frac12\sum_{k=K}^N d_{k-1}
+\frac{C}{2c_0}\sum_{k=K}^N\Delta_k .
\]
Since \(\sum_{k=K}^N d_{k-1}\le d_{K-1}+\sum_{k=K}^N d_k\) and \(\sum_{k=K}^N\Delta_k=\phi(h_K)-\phi(h_{N+1})\le \phi(h_K)\), it follows that
\[
\sum_{k=K}^N d_k
\le d_{K-1}+\frac{C}{c_0}\phi(h_K).
\]
The right-hand side is independent of \(N\). Letting \(N\to\infty\), we obtain \(\sum_{k=K}^\infty d_k<\infty\). Adding the finite number of preceding terms yields
\[
\sum_{k=0}^\infty \|x^{k+1}-x^k\|_2<\infty .
\]
\end{proof}

\subsection{Finite support identification and local oracle property}

The convergence results above guarantee that the generated sequence approaches
a critical point of the target objective. We now turn to the local behavior of
the method near the true sparse signal, where the role of the pursuit step can
be made more explicit. The key question is whether the proximal thresholding
step can identify the true support once the iterate is sufficiently close to
the signal. If the active and inactive coordinates are separated by the
terminal threshold, the answer is positive: the $\ell_{1-2}$ proximal step
recovers the correct support. On this support, the restricted least-squares
pursuit step coincides with the oracle estimator, provided that it passes the
strict descent check.

For the local analysis, assume that the noisy observation is generated by
$b=Ax^*+\epsilon$, where $x^*$ is an $s$-sparse signal with support
$T^*=\operatorname{supp}(x^*)$. Let
$x_{\min}:=\min_{i\in T^*}|(x^*)_i|$ be the minimum nonzero magnitude of
the true signal.

\begin{theorem}
\label{thm:support_identification}
Suppose that
$\lambda_k=\lambda_{\rm tar}$ at iteration $k$. Define
$$M_{2,\infty}:=\|I-vA^{\top}A\|_{2\to\infty}
=\max_{1\le i\le n}\|e_i^{\top}(I-vA^{\top}A)\|_2.$$ If the current iterate satisfies
$\|x^k-x^*\|_2\le \rho$ and the parameters satisfy
\begin{equation}
\label{eq:rho_condition}
M_{2,\infty}\rho+v\|A^{\top}\epsilon\|_{\infty}
<
v\lambda_{\rm tar}
<
x_{\min}-M_{2,\infty}\rho-v\|A^{\top}\epsilon\|_{\infty},
\end{equation}
then the proximal thresholding step identifies the true support:
$\operatorname{supp}(h^{k+1})=T^*$.
\end{theorem}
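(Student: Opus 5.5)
The plan is to write the gradient point $z^k$ as a perturbation of the true signal and then read off the support from the explicit formula~\eqref{eq:itac_update}. Using $b=Ax^*+\epsilon$, we have
\[
z^k=x^k-vA^{\top}(Ax^k-Ax^*-\epsilon)=x^*+(I-vA^{\top}A)(x^k-x^*)+vA^{\top}\epsilon .
\]
Define the perturbation $w^k:=z^k-x^*$. Its $i$-th component equals $e_i^{\top}(I-vA^{\top}A)(x^k-x^*)+v(A^{\top}\epsilon)_i$. By Cauchy--Schwarz and the definition of $M_{2,\infty}$,
\[
\|w^k\|_\infty\le M_{2,\infty}\rho+v\|A^{\top}\epsilon\|_\infty=:\beta .
\]
Condition~\eqref{eq:rho_condition} then reads $\beta<v\lambda_{\rm tar}<x_{\min}-\beta$, with $\tau=v\lambda_k=v\lambda_{\rm tar}$.

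Next, split the coordinates. For $i\notin T^*$ we have $z_i^k=w_i^k$, so $|z_i^k|\le\beta<\tau$. For $i\in T^*$ we have $|z_i^k|\ge |x^*_i|-|w_i^k|\ge x_{\min}-\beta>\tau$. Since $x^*\neq 0$ is implicit (otherwise $x_{\min}$ is undefined and $T^*$ is empty), $T^*$ is nonempty.

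It follows that $\|z^k\|_\infty>\tau$, so $\mathcal{S}_\tau(z^k)\neq0$ and we are in the nondegenerate branch of~\eqref{eq:itac_update}, which is case (i) of Lemma~\ref{lemma:proximal_l12}. In this branch $h^{k+1}$ is a positive scalar multiple, namely $1+\tau/\|\mathcal{S}_\tau(z^k)\|_2>0$, of $\mathcal{S}_\tau(z^k)$. Hence $\operatorname{supp}(h^{k+1})=\operatorname{supp}(\mathcal{S}_\tau(z^k))=\{i:|z_i^k|>\tau\}$. By the strict inequalities just derived, this set is exactly $T^*$.

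The argument is essentially routine. The only points needing care are the two strict inequalities. Strictness on the inactive side matters because soft-thresholding zeroes entries with $|z_i|\le\tau$ but keeps those with $|z_i|>\tau$; the upper strict inequality guarantees that no true coordinate is thresholded away. The other point is ruling out the degenerate one-sparse branch, which holds because active coordinates exceed $\tau$.
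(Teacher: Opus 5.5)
Your proposal is correct and takes the same route as the paper. You expand $z^k=x^*+(I-vA^{\top}A)(x^k-x^*)+vA^{\top}\epsilon$, bound the perturbation coordinatewise by $M_{2,\infty}\rho+v\|A^{\top}\epsilon\|_\infty$, separate active from inactive coordinates using the threshold $v\lambda_{\rm tar}$, and note that the nondegenerate branch rescales $\mathcal{S}_{v\lambda_{\rm tar}}(z^k)$ by a positive factor, so the support is preserved (your remark that $T^*\neq\emptyset$ is what excludes the degenerate branch is left implicit in the paper).
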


\begin{proof}
Recall that $z^k=x^k-vA^{\top}(Ax^k-b)$. Since
$b=Ax^*+\epsilon$, we have
\[
z^k
=
x^*
+
(I-vA^{\top}A)(x^k-x^*)
+
vA^{\top}\epsilon .
\]
For any $i\in T^*$, the reverse triangle inequality gives
\[
|z_i^k|
\ge
x_{\min}-M_{2,\infty}\rho-v\|A^{\top}\epsilon\|_\infty .
\]
By the right-hand inequality in equation~\eqref{eq:rho_condition}, it follows
that $|z_i^k|>v\lambda_{\rm tar}$ for all $i\in T^*$. Hence
$\mathcal{S}_{v\lambda_{\rm tar}}(z^k)\neq 0$, and the nondegenerate branch of
the proximal update in equation~\eqref{eq:itac_update} applies. In this case, $h^{k+1}$ is a positive scalar multiple of
$\mathcal{S}_{v\lambda_{\rm tar}}(z^k)$, and all coordinates in $T^*$ remain
nonzero in $h^{k+1}$.

For any $i\notin T^*$, we have $(x^*)_i=0$, and the same expansion
yields
\[
|z_i^k|
\le
M_{2,\infty}\rho+v\|A^{\top}\epsilon\|_\infty .
\]
By the left-hand inequality in equation~\eqref{eq:rho_condition}, we obtain
$|z_i^k|<v\lambda_{\rm tar}$ for all $i\notin T^*$. Thus the corresponding
soft-thresholded coordinates vanish, and so do the corresponding coordinates
of $h^{k+1}$. Therefore
\[
\operatorname{supp}(h^{k+1})=T^* .
\]
\end{proof}

\begin{remark}
\label{remark:noise_separation}
The separation condition in equation~\eqref{eq:rho_condition} contains both a lower and an
upper restriction on the terminal threshold $v\lambda_{\rm tar}$:
\[
M_{2,\infty}\rho+v\|A^{\top}\epsilon\|_{\infty}
<
v\lambda_{\rm tar}
<
x_{\min}-M_{2,\infty}\rho-v\|A^{\top}\epsilon\|_{\infty}.
\]
The lower bound prevents inactive coordinates from surviving the
soft-thresholding step, whereas the upper bound prevents true active
coordinates from being removed. Hence the condition requires a nonempty
separation interval, equivalently
\[
x_{\min}
>
2\left(M_{2,\infty}\rho+v\|A^{\top}\epsilon\|_{\infty}\right).
\]
This is a standard signal-to-noise type requirement for support recovery \cite{wainwright2009sharp}, requiring the minimum
nonzero coefficient to dominate the perturbation caused by the current
iteration error and the noise back-projection.

The term $\|A^{\top}\epsilon\|_\infty$ measures the maximum correlation between the
noise vector and the columns of the sensing matrix. If
$\epsilon\sim\mathcal{N}(0,\sigma^2 I_m)$ and $a_j$ denotes the $j$-th column
of $A$, then
\[
a_j^{\top}\epsilon\sim \mathcal{N}(0,\sigma^2\|a_j\|_2^2).
\]
Consequently, a standard Gaussian maximum bound gives, with high probability
over the noise,
\[
\|A^{\top}\epsilon\|_\infty
\lesssim
\sigma \left(\max_{1\le j\le n}\|a_j\|_2\right)\sqrt{2\log n}.
\]
In particular, when the sensing matrix is column-normalized so that
$\|a_j\|_2\le 1$, this term is of order
$\mathcal{O}(\sigma\sqrt{\log n})$. Thus the admissible range of
$\lambda_{\rm tar}$ is governed by the noise level, the conditioning of the
sensing matrix through $M_{2,\infty}$, the current distance $\rho$ to the true
signal, and the minimum signal amplitude $x_{\min}$.
\end{remark}

Theorem~\ref{thm:support_identification} gives a local condition under which
the proximal thresholding step identifies the true support. Once this occurs,
the pursuit step is performed on the correct active set and its effect can be
described explicitly. In the noiseless case, the restricted least-squares
candidate recovers the true signal in one step, provided that it is accepted by
the strict descent check. In the noisy case, the same candidate coincides with
the oracle restricted least-squares estimator and satisfies a stable
noise-dependent error bound. The next theorem formalizes these properties.

\begin{theorem}
\label{thm:local_oracle_property}
Assume that the conditions of Theorem~\ref{thm:support_identification} hold at
iteration $k$, so that the proximal thresholding step identifies the true support
$\operatorname{supp}(h^{k+1})=T^*$. Suppose further that $s=|T^*|<m$ and
$A_{T^*}$ has full column rank. Then the pursuit candidate generated by
Algorithm~\ref{alg:itp_c} is the oracle least-squares estimator on the true
support:
\begin{equation}
\label{eq:local_oracle_ls}
(x_p^{k+1})_{T^*}
=
(A_{T^*}^{\top}A_{T^*})^{-1}A_{T^*}^{\top}b,
\qquad
(x_p^{k+1})_{(T^*)^c}=0.
\end{equation}
In particular, if $b=Ax^*$, then $x_p^{k+1}=x^*$. If the
oracle pursuit candidate satisfies the strict descent check,
then it is accepted by Algorithm~\ref{alg:itp_c}, namely
$x^{k+1}=x_p^{k+1}$.

Moreover, suppose that $b=Ax^*+\epsilon$ with
$\|\epsilon\|_2\le \varepsilon$. If $A$ satisfies the RIP of order $s$ with
constant $\delta_s\in(0,1)$ and the oracle pursuit candidate is accepted, then
the accepted estimate satisfies
\begin{equation}
\label{eq:local_oracle_error_bound}
\|x^{k+1}-x^*\|_2
\le
\frac{\varepsilon}{\sqrt{1-\delta_s}}.
\end{equation}
\end{theorem}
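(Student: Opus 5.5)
The plan is to follow the algorithm step by step on the event $T^{k+1}=T^*$ supplied by Theorem~\ref{thm:support_identification}. First, since $|T^{k+1}|=s<m$, Algorithm~\ref{alg:itp_c} enters the pursuit branch and sets $(x_p^{k+1})_{T^*}=A_{T^*}^\dagger b$ with zeros off $T^*$. Because $A_{T^*}$ has full column rank, the Moore--Penrose pseudoinverse reduces to $(A_{T^*}^\top A_{T^*})^{-1}A_{T^*}^\top$, which gives \eqref{eq:local_oracle_ls}.

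In the noiseless case $b=Ax^*=A_{T^*}x^*_{T^*}$, so $(A_{T^*}^\top A_{T^*})^{-1}A_{T^*}^\top A_{T^*}x^*_{T^*}=x^*_{T^*}$. Together with the zero pattern off $T^*$, this gives $x_p^{k+1}=x^*$. The acceptance statement is then read directly from the rule \eqref{eq:descent_check}: if $F_k(x_p^{k+1})<F_k(h^{k+1})$, the algorithm sets $x^{k+1}=x_p^{k+1}$. Nothing beyond the definition is needed here.

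For the error bound, I substitute $b=A_{T^*}x^*_{T^*}+\epsilon$ into the oracle formula and obtain
\[
(x^{k+1}-x^*)_{T^*}=(A_{T^*}^\top A_{T^*})^{-1}A_{T^*}^\top\epsilon=A_{T^*}^\dagger\epsilon,
\]
with the difference vanishing off $T^*$. So $\|x^{k+1}-x^*\|_2\le\|A_{T^*}^\dagger\|_2\,\varepsilon$. The remaining step, which is the only genuine estimate, is to bound the operator norm of the pseudoinverse. Writing the SVD $A_{T^*}=U\Sigma V^\top$ gives $A_{T^*}^\dagger=V\Sigma^{-1}U^\top$, so $\|A_{T^*}^\dagger\|_2=1/\sigma_{\min}(A_{T^*})$. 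The RIP of order $s$ applied to vectors supported on $T^*$ gives $\|A_{T^*}u\|_2^2\ge(1-\delta_s)\|u\|_2^2$ for all $u\in\mathbb{R}^s$. Hence $\sigma_{\min}(A_{T^*})\ge\sqrt{1-\delta_s}$, which yields \eqref{eq:local_oracle_error_bound}.

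I expect no real obstacle. The only points needing care are, first, that the pursuit branch is actually executed, which follows from $s<m$, and second, that one uses $\|A^\dagger\|=1/\sigma_{\min}$ rather than the cruder route via $\|(A^\top A)^{-1}\|\,\|A^\top\|$. The cruder route would give the weaker bound $\sqrt{1+\delta_s}/(1-\delta_s)$ instead of $1/\sqrt{1-\delta_s}$.
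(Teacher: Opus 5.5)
Your proposal is correct and matches the paper's proof essentially step for step: you use $T^{k+1}=T^*$, reduce $A_{T^*}^\dagger$ to $(A_{T^*}^\top A_{T^*})^{-1}A_{T^*}^\top$ by full column rank, read off exact recovery and acceptance directly from the algorithm, and bound $\|A_{T^*}^\dagger\|_2=1/\sigma_{\min}(A_{T^*})\le 1/\sqrt{1-\delta_s}$ via the RIP. Your explicit remark that $s<m$ is what makes the algorithm execute the pursuit branch is a small point the paper leaves implicit.
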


\begin{proof}
By Theorem~\ref{thm:support_identification}, the proximal thresholding step identifies
$T^*$. Hence the pursuit stage solves the restricted least-squares problem on
$T^*$. Since $A_{T^*}$ has full column rank, this problem has the unique
solution given in equation~\eqref{eq:local_oracle_ls}.

In the noiseless case, $b=Ax^*=A_{T^*}(x^*)_{T^*}$. Therefore
equation~\eqref{eq:local_oracle_ls} gives
\[
(x_p^{k+1})_{T^*}=(x^*)_{T^*},
\qquad
(x_p^{k+1})_{(T^*)^c}=0,
\]
and hence $x_p^{k+1}=x^*$. If the strict descent check holds, the
acceptance statement follows directly from Algorithm~\ref{alg:itp_c}.

It remains to prove the noisy oracle error bound. Since the oracle pursuit
candidate is accepted, $x^{k+1}=x_p^{k+1}$. Using
$b=A_{T^*}(x^*)_{T^*}+\epsilon$, equation~\eqref{eq:local_oracle_ls}
yields
\[
(x^{k+1})_{T^*}-(x^*)_{T^*}
=
A_{T^*}^{\dagger}\epsilon,
\qquad
(x^{k+1})_{(T^*)^c}=(x^*)_{(T^*)^c}=0.
\]
Consequently,
\[
\|x^{k+1}-x^*\|_2
\le
\|A_{T^*}^{\dagger}\|_2\|\epsilon\|_2.
\]
By the RIP of order $s$, $\sigma_{\min}(A_{T^*})\ge\sqrt{1-\delta_s}$, and hence
$\|A_{T^*}^{\dagger}\|_2\le 1/\sqrt{1-\delta_s}$. Combining this estimate with
$\|\epsilon\|_2\le\varepsilon$ gives equation~\eqref{eq:local_oracle_error_bound}.
\end{proof}

Theorem~\ref{thm:local_oracle_property} explains how the pursuit step removes
the regularization-induced shrinkage after correct support identification.
Continuous regularized estimators, such as Lasso-type methods, may exhibit
shrinkage on the active coefficients, and their statistical error bounds often
contain terms depending explicitly on the regularization parameter, for
instance of order $\mathcal{O}(\lambda\sqrt{s})$ under standard sparse
recovery conditions \cite{bickel2009simultaneous, fan2001variable,
zou2006adaptive}. Similar regularization-dependent error terms also appear in
first-order $\ell_{1-2}$ thresholding analyses; see, for example, Remark~2.1
in \cite{hu2026l12}.

In contrast, Theorem~\ref{thm:local_oracle_property} shows that once the
correct support has been identified by the $\ell_{1-2}$ proximal step and the
corresponding pursuit candidate is accepted by the strict descent check,
ITP-C replaces the regularized proximal iterate with the restricted
least-squares estimator on the identified active set. In this conditional
oracle regime, the shrinkage bias on the active coefficients is removed, and
the resulting error bound depends only on the noise level and the conditioning
of $A_{T^*}$.


\section{Numerical experiments}
\label{sec:experiments}

In this section, we evaluate the proposed ITP-C method through synthetic sparse
recovery and image reconstruction.
All simulations are implemented in MATLAB R2024b and run on a desktop computer
with an Intel Core Ultra 9 185H CPU and 32 GB of RAM.

The compared methods include representative prior-free sparse recovery
solvers: the convex baseline L1-FISTA \cite{beck2009fast}, the reweighted
$\ell_1$ method \cite{candes2008enhancing}, L1-2 PLDCA
\cite{yin2015minimization}, L1-2 PGD \cite{lou2018computing}, and the baseline
L1-2 ITAC \cite{hu2026l12}. 

For the synthetic experiments, we test two sensing structures:
\begin{itemize}
    \item \textbf{Gaussian matrix.}
    The Gaussian sensing matrix is generated by
    \[
    A_{i,j}\overset{i.i.d.}{\sim}\mathcal{N}(0,1/m),
    \qquad
    i=1,\ldots,m,\quad j=1,\ldots,n.
    \]
    This matrix serves as a standard compressed sensing benchmark.

    \item \textbf{PDCT-type matrix.}
    To examine performance beyond the Gaussian setting, we also use a
    PDCT-type random cosine sensing matrix defined by
    \[
    A_{i,j}
    =
    \frac{1}{\sqrt{m}}\cos(2\pi j\xi_i),
    \qquad
    i=1,\ldots,m,\quad j=1,\ldots,n,
    \]
    where $\xi=(\xi_1,\ldots,\xi_m)^{\top}$ and
    $\xi_i\overset{i.i.d.}{\sim}\mathcal{U}[0,1]$.
\end{itemize}
Random Gaussian and PDCT-type sensing matrices are standard choices in
compressed sensing, since they are incoherent and have small restricted
isometry constants with high probability under suitable sampling conditions
\cite{candes2006robust,blanchard2011compressed}.

In the synthetic experiments, the observation vector is generated by
\[
    b = A x^\ast + \sigma \epsilon,
\]
where \(x^\ast\) is the ground-truth sparse signal and 
\(\epsilon \sim \mathcal{N}(0,I_m)\). The parameter \(\sigma\) 
controls the noise level.

The experiments are organized as follows. Section~5.1 examines the role of the
descent safeguard by comparing the dynamic objective trajectories of ITAC,
ITP-C without the descent check, and the proposed ITP-C. Section~5.2 compares
ITP-C with representative prior-free solvers in terms of time-to-accuracy,
relative recovery error, and empirical success rates under Gaussian and
PDCT-type sensing matrices. Multiple noise levels
$\sigma\in\{0,0.001,0.0015,0.002\}$ are used in the accuracy-oriented
computational study, while representative noiseless and noisy cases are shown
for the trajectory and phase-transition experiments. Section~5.3 evaluates the
method on image reconstruction under Gaussian measurement noise with standard
deviation $\sigma=10^{-3}$.

\subsection{Descent safeguard and dynamic objective behavior}
\label{subsec:ablation}

The first experiment examines the role of the strict descent check in ITP-C.
To isolate the effect of this safeguard, we also include an unchecked variant,
denoted by ITP-C (no check), in which the pursuit candidate is accepted
whenever it is computed. 


The sensing matrix $A \in \mathbb{R}^{m \times n}$ is generated with
$m = 512$ and $n = 1024$, and the true signal $x^*$ has sparsity
$s = 50$. We compare the dynamic objective trajectories over 100 iterations
under both noiseless ($\sigma = 0$) and noisy ($\sigma = 0.001$) settings.

\begin{figure}[!t]
    \centering
    \subfigure[Gaussian sensing matrix]{
        \includegraphics[width=0.86\textwidth]{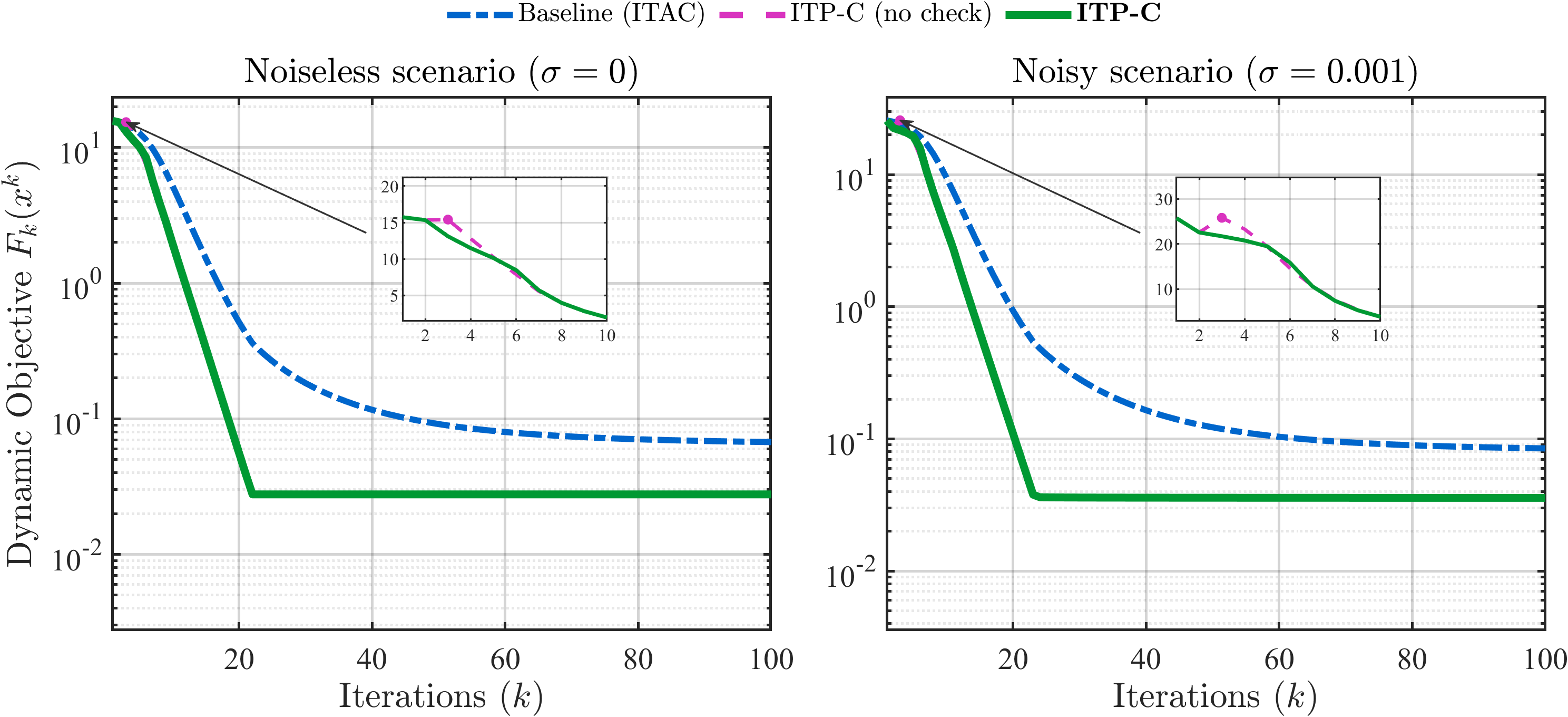}
    }\\[-0.5mm]
    \subfigure[PDCT-type sensing matrix]{
        \includegraphics[width=0.86\textwidth]{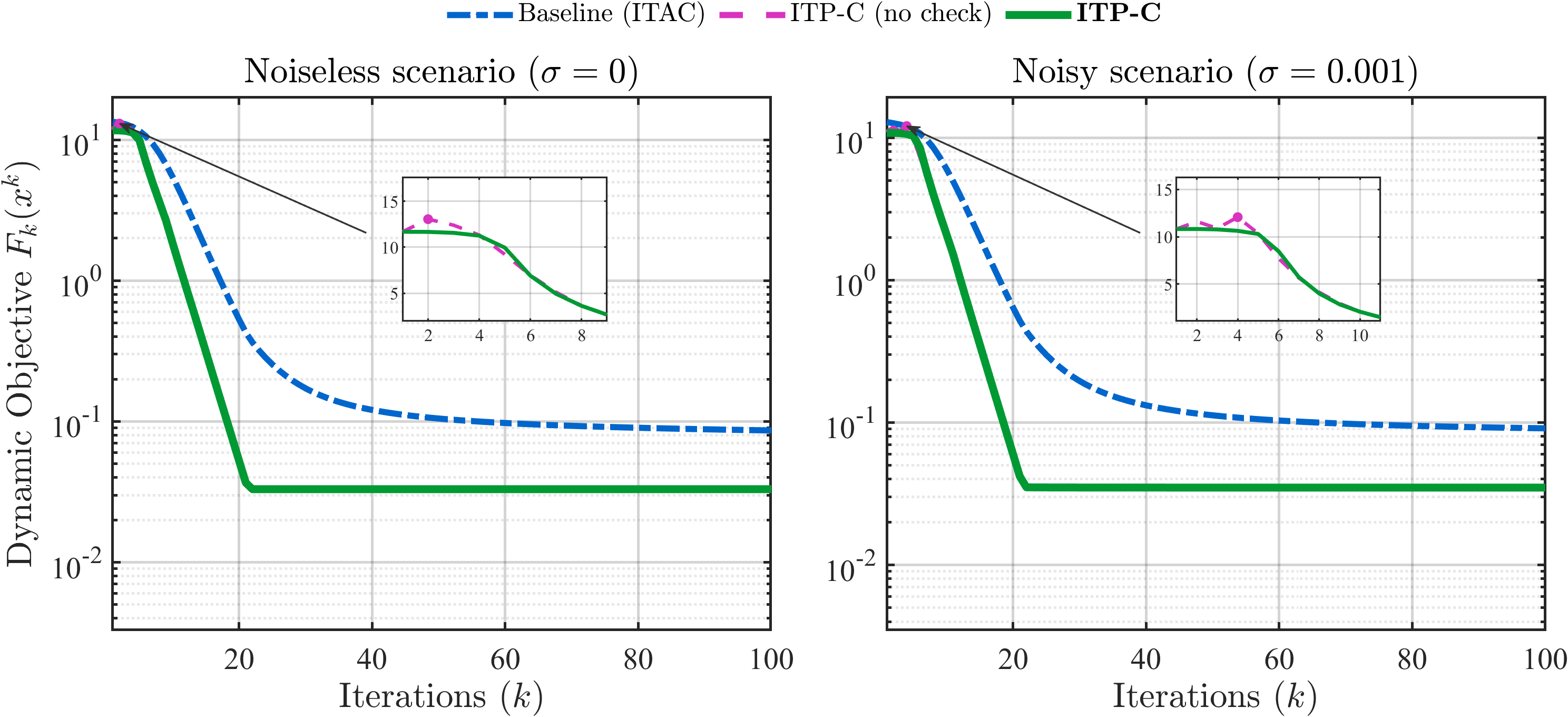}
    }
    \vspace{-1mm}
    \caption{Evolution of the dynamic objective value $F_k(x^k)$.
    For each sensing matrix, the left panel corresponds to the noiseless case
    ($\sigma=0$) and the right panel corresponds to the noisy case
    ($\sigma=0.001$). The magnified insets highlight representative local
    increases of the unchecked variant.}
    \label{fig:convergence}
\end{figure}

Figure~1 shows that the descent-checked ITP-C decreases the dynamic objective
more rapidly than the baseline ITAC method. This behavior is consistent with
the active-set least-squares correction, which exploits the local quadratic
structure of the data-fitting term on the identified support. The unchecked
variant illustrates the role of the safeguard: without testing the dynamic
objective, the least-squares pursuit step may produce local upward jumps in
$F_k(x^k)$, as shown in the magnified insets. Such jumps can occur when the
identified support is inaccurate, since reducing the residual on the selected
support does not necessarily decrease the full nonconvex regularized objective.

\subsection{Comparison with representative sparse recovery algorithms}
\label{subsec:comparison}

We next compare ITP-C with representative prior-free sparse recovery solvers.
The signal dimension
is set to $n=1024$, the number of measurements is $m=512$, and the sparsity
level is $s=50$. We consider the noiseless case as a reference and three noisy
settings with $\sigma\in\{0.001,0.0015,0.002\}$. All algorithms are run with
a maximum iteration limit of $K_{\max}=800$, and the practical stopping
criterion is based on the relative step difference
\[
\frac{\|x^k-x^{k-1}\|_2}{\max(1,\|x^{k-1}\|_2)}<10^{-5}.
\]
The relative step-difference criterion is used as the practical stopping rule.
For these synthetic experiments, where the ground truth is available, we also
record the first CPU time and iteration number at which the relative recovery
error ${\rm RelErr}(x^k):=\|x^k-x^{*}\|_2/\|x^{*}\|_2$ falls
below $10^{-2}$. The final relative error after termination is also
reported to indicate the resulting recovery accuracy.

\begin{table}[h]
  \centering
  \caption{CPU time and iteration numbers required to reach relative recovery
  error below $10^{-2}$, together with the final relative error.}
  \label{tab:time_to_target_error}
  \scriptsize
  \setlength{\tabcolsep}{3pt}
  \renewcommand{\arraystretch}{1.12}
  \resizebox{0.98\textwidth}{!}{%
  \begin{tabular}{l c c c c c c c c c c c c}
    \toprule
    \multirow{2}{*}{Algorithm}
    & \multicolumn{3}{c}{$\sigma=0$}
    & \multicolumn{3}{c}{$\sigma=0.001$}
    & \multicolumn{3}{c}{$\sigma=0.0015$}
    & \multicolumn{3}{c}{$\sigma=0.002$} \\
    \cmidrule(lr){2-4}
    \cmidrule(lr){5-7}
    \cmidrule(lr){8-10}
    \cmidrule(lr){11-13}
    & Time & Iters & RelErr
    & Time & Iters & RelErr
    & Time & Iters & RelErr
    & Time & Iters & RelErr \\
    \midrule

    \multicolumn{13}{c}{\textbf{Gaussian Matrix}} \\
    \midrule
    L1-FISTA
      & 0.0327 & 466 & 1.49e-03
      & 0.0251 & 352 & 2.79e-03
      & 0.0178 & 353 & 4.36e-03
      & 0.0182 & 356 & 6.19e-03 \\
    \textbf{ITP-C}
      & 0.0531 & 24 & 6.11e-16
      & 0.0435 & 14 & 2.65e-03
      & 0.0448 & 15 & 4.18e-03
      & 0.0445 & 15 & 5.90e-03 \\

    \midrule
    \multicolumn{13}{c}{\textbf{PDCT Matrix}} \\
    \midrule
    L1-FISTA
      & 0.0146 & 423 & 1.99e-03
      & 0.0127 & 307 & 3.63e-03
      & 0.0145 & 308 & 5.37e-03
      & 0.0171 & 310 & 7.34e-03 \\
    \textbf{ITP-C}
      & 0.0549 & 25 & 5.86e-16
      & 0.1043 & 13 & 3.47e-03
      & 0.0508 & 19 & 5.17e-03
      & 0.0610 & 21 & 7.01e-03 \\
    \bottomrule
  \end{tabular}%
  }
\end{table}

Table~\ref{tab:time_to_target_error} lists only L1-FISTA and ITP-C because
the other tested prior-free baselines, including L1-Reweighted, L1-2 PLDCA,
L1-2 PGD, and L1-2 ITAC, do not reach the $10^{-2}$ relative-error threshold
within $K_{\max}=800$ iterations in these tests. 

The results show two complementary aspects of recovery efficiency. L1-FISTA
has inexpensive first-order iterations and can have smaller raw CPU time in
some cases, but it requires several hundred iterations to reach the target
accuracy. By contrast, ITP-C reaches the same accuracy level within only a few
tens of iterations. The final relative errors further show that ITP-C reaches
near machine precision in the noiseless cases and attains lower final
errors than L1-FISTA in the noisy cases. This behavior is consistent with the
proposed mechanism: once a reliable active set is identified, the restricted
least-squares pursuit step recomputes the active coefficients and rapidly
reduces the recovery error. 
Overall, ITP-C improves iteration efficiency and
final recovery accuracy in these tests, while its wall-clock advantage depends
on the cost of the restricted least-squares correction.

\begin{figure}[!t]
    \centering
    \begin{minipage}[t]{0.49\textwidth}
      \centering
      \includegraphics[width=\linewidth]{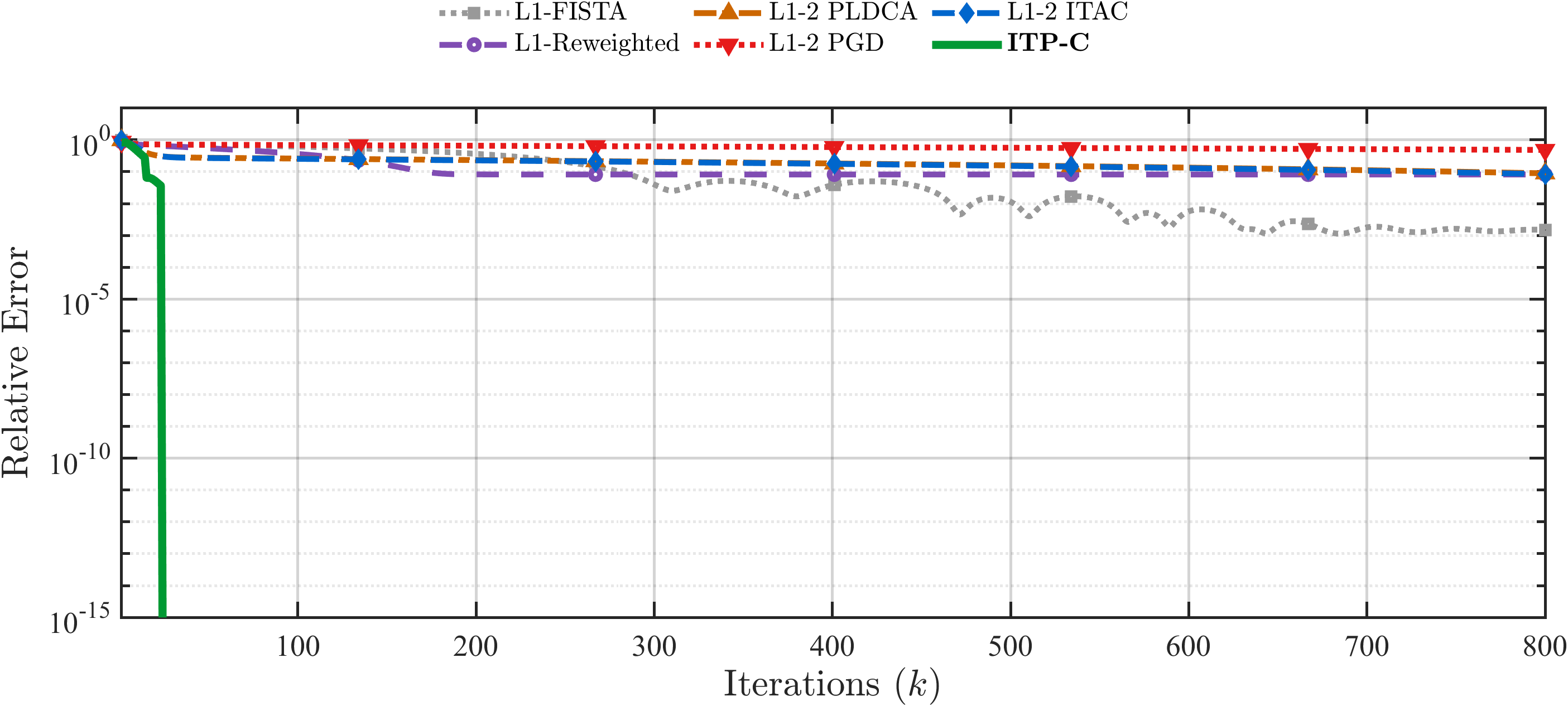}\\[-0.8mm]
      {\small (a) Noiseless scenario ($\sigma=0$)}
    \end{minipage}\hfill
    \begin{minipage}[t]{0.49\textwidth}
      \centering
      \includegraphics[width=\linewidth]{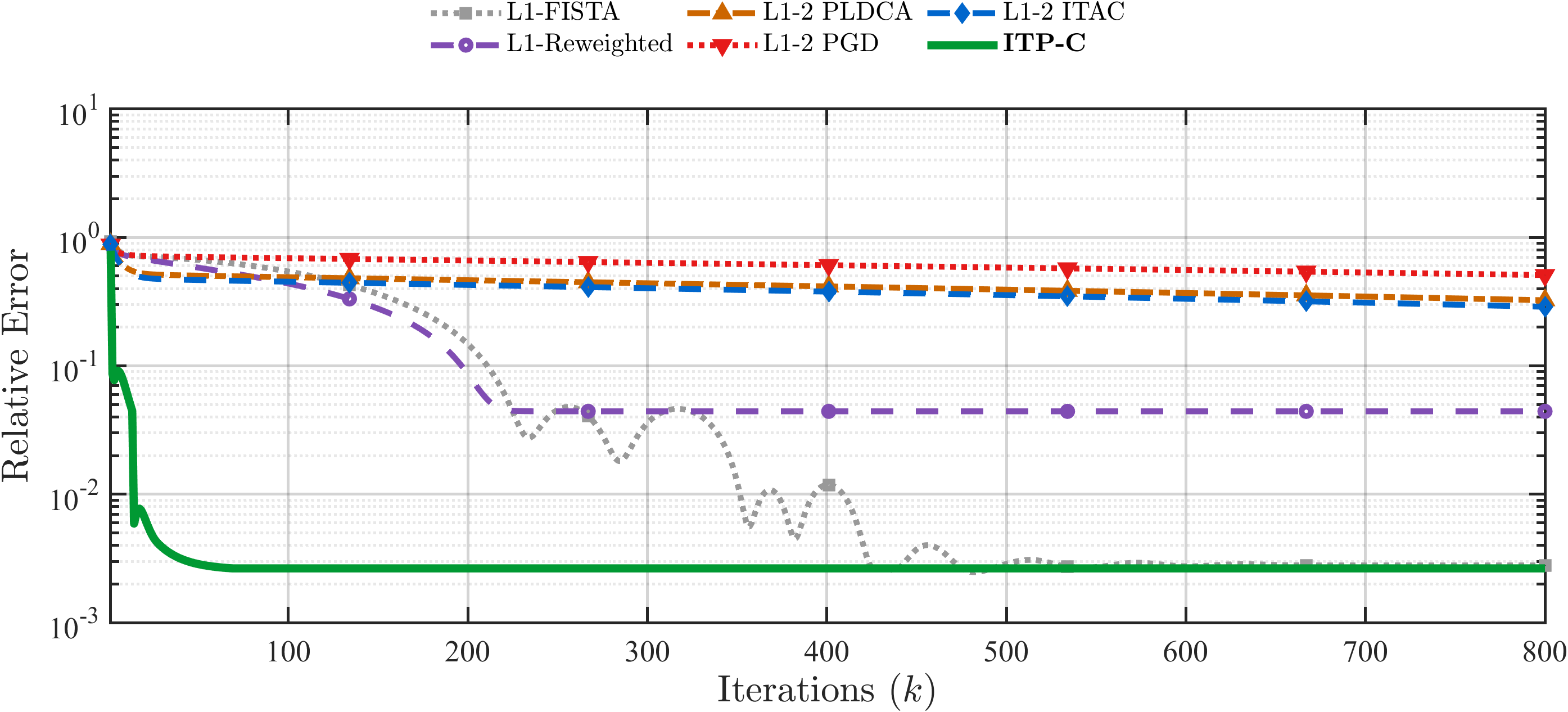}\\[-0.8mm]
      {\small (b) Noisy scenario ($\sigma=0.001$)}
    \end{minipage}
    \vspace{-0.08cm}
    \caption{Relative recovery error trajectories under the Gaussian sensing
    matrix. The two subfigures correspond to the noiseless case ($\sigma=0$)
    and the noisy case ($\sigma=0.001$).}
    \label{fig:convergence_gau}
\end{figure}

\begin{figure}[!t]
    \centering
    \begin{minipage}[t]{0.49\textwidth}
      \centering
      \includegraphics[width=\linewidth]{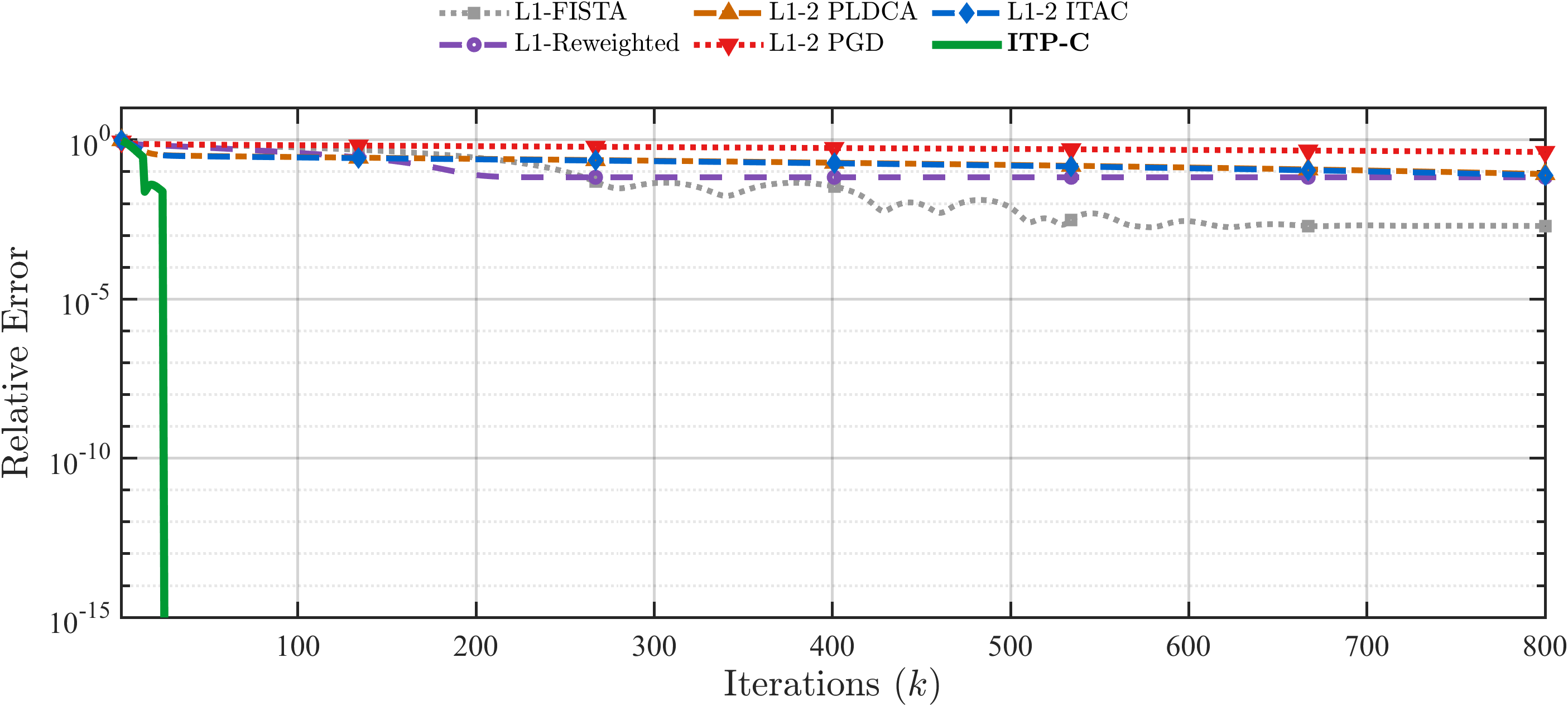}\\[-0.8mm]
      {\small (a) Noiseless scenario ($\sigma=0$)}
    \end{minipage}\hfill
    \begin{minipage}[t]{0.49\textwidth}
      \centering
      \includegraphics[width=\linewidth]{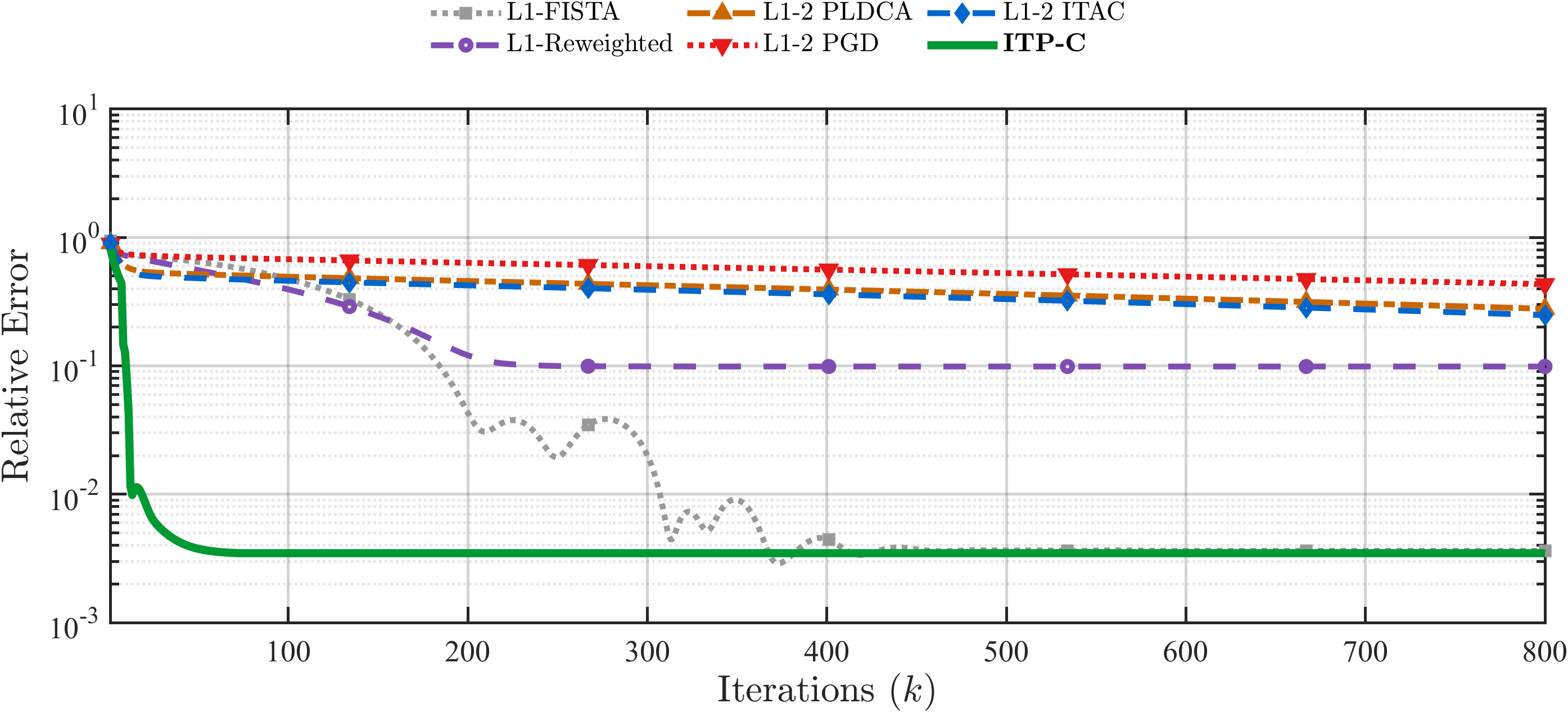}\\[-0.8mm]
      {\small (b) Noisy scenario ($\sigma=0.001$)}
    \end{minipage}
    \vspace{-0.08cm}
    \caption{Relative recovery error trajectories under the PDCT-type sensing
    matrix. The two subfigures correspond to the noiseless case ($\sigma=0$)
    and the noisy case ($\sigma=0.001$).}
    \label{fig:convergence_pdct}
\end{figure}


Figures~\ref{fig:convergence_gau} and~\ref{fig:convergence_pdct} show the
relative recovery error trajectories under the Gaussian and PDCT-type sensing
matrices. Several first-order or reweighted baselines, including L1-2 PGD,
L1-2 ITAC, L1-2 PLDCA, and L1-Reweighted, remain above the target accuracy
within the prescribed iteration budget. In contrast, L1-FISTA and ITP-C reach
the $10^{-2}$ relative-error level in these representative instances.

The two successful methods exhibit different error-reduction patterns. The
FISTA curve decreases gradually over many iterations, whereas the ITP-C curve
typically shows a sharp decrease once the pursuit step acts on a reliable
active set. In the noiseless case, this behavior can lead to nearly exact
recovery after support identification and least-squares refitting. In the noisy
case, the attainable error is limited by the measurement noise, but ITP-C still
enters the low-error regime within far fewer iterations. These observations are
consistent with the conditional oracle mechanism: after the active set becomes
sufficiently reliable, the restricted least-squares pursuit step reduces the
shrinkage left by thresholding-based updates. In this sense, the curves reflect the difference between gradual first-order
thresholding progress and a restricted second-order active-set correction on
the identified support.


We further assess the robustness of the compared methods by reporting empirical success
rates over repeated random trials. In
the phase-transition tests, the signal dimension is fixed at $n=512$. The left
panels fix the sparsity level at $s=25$ and vary the measurement ratio $m/n$,
whereas the right panels fix the number of measurements at $m=256$ and vary
the sparsity ratio $s/n$. For each grid point, $100$ independent trials are
performed with fixed random seeds. A trial is counted as successful if the
relative recovery error falls below $10^{-2}$ within $800$ iterations.

\begin{figure}[!htbp]
    \centering
    \begin{minipage}[t]{0.49\textwidth}
      \centering
      \includegraphics[width=\linewidth]{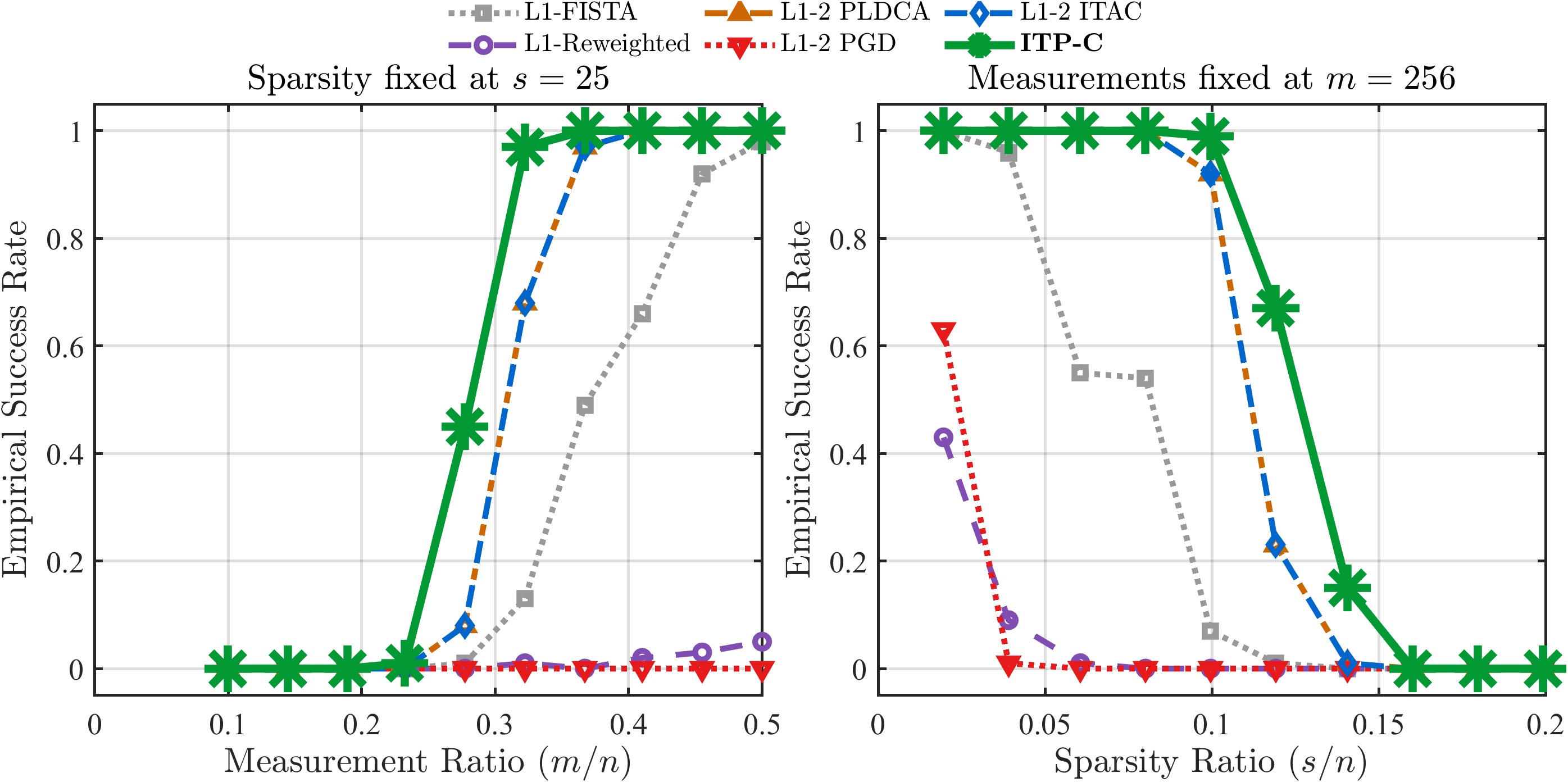}\\[-0.8mm]
      {\small (a) Noiseless scenario ($\sigma=0$)}
    \end{minipage}\hfill
    \begin{minipage}[t]{0.49\textwidth}
      \centering
      \includegraphics[width=\linewidth]{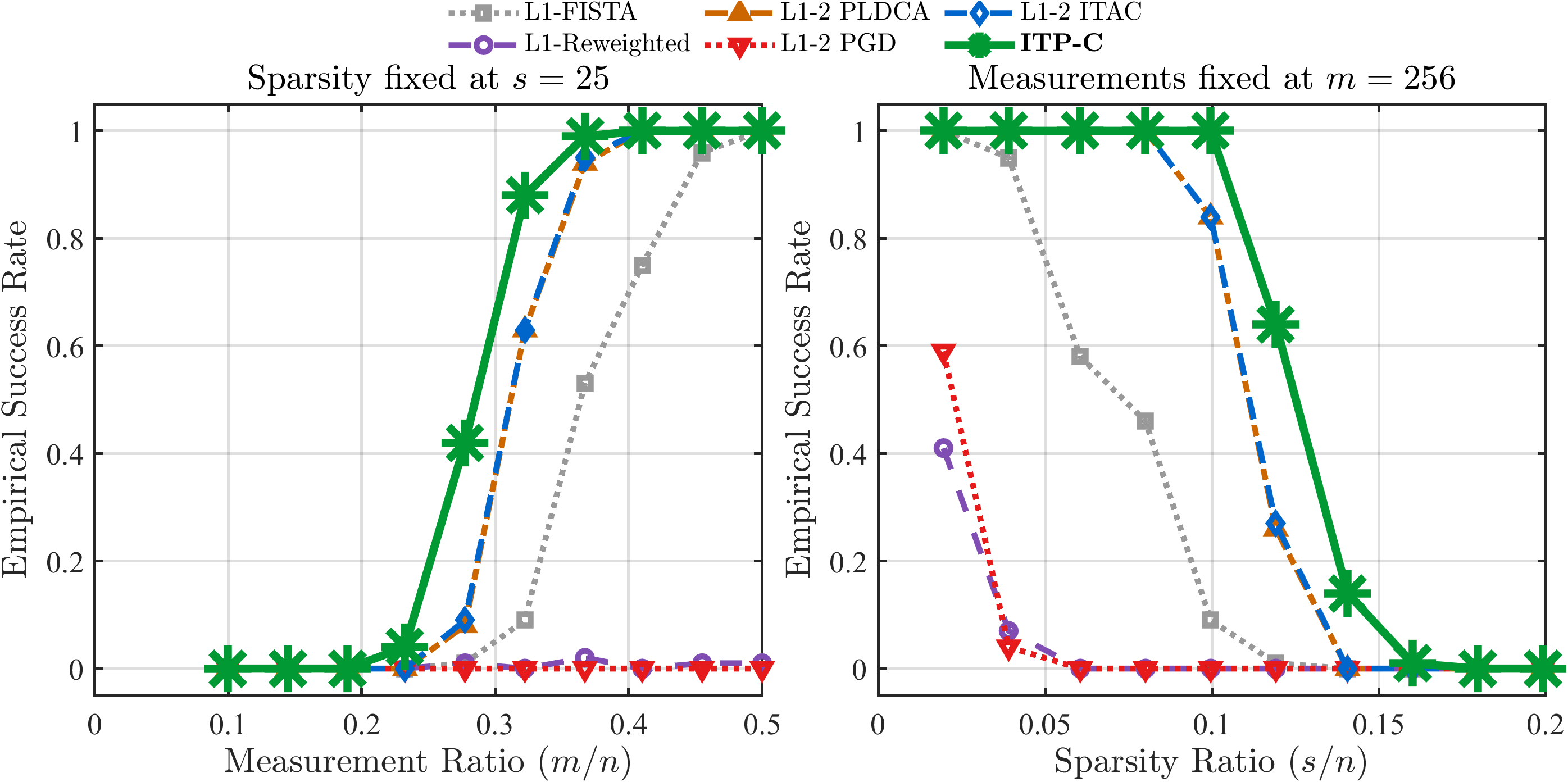}\\[-0.8mm]
      {\small (b) Noisy scenario ($\sigma=0.001$)}
    \end{minipage}
    \caption{Empirical success rate curves under the Gaussian sensing matrix.
    In each subfigure, the left panel reports the success rate versus the
    measurement ratio, and the right panel reports the success rate versus the
    sparsity ratio.}
    \label{fig:success_gau}
\end{figure}

\begin{figure}[!htbp]
    \centering
    \begin{minipage}[t]{0.49\textwidth}
      \centering
      \includegraphics[width=\linewidth]{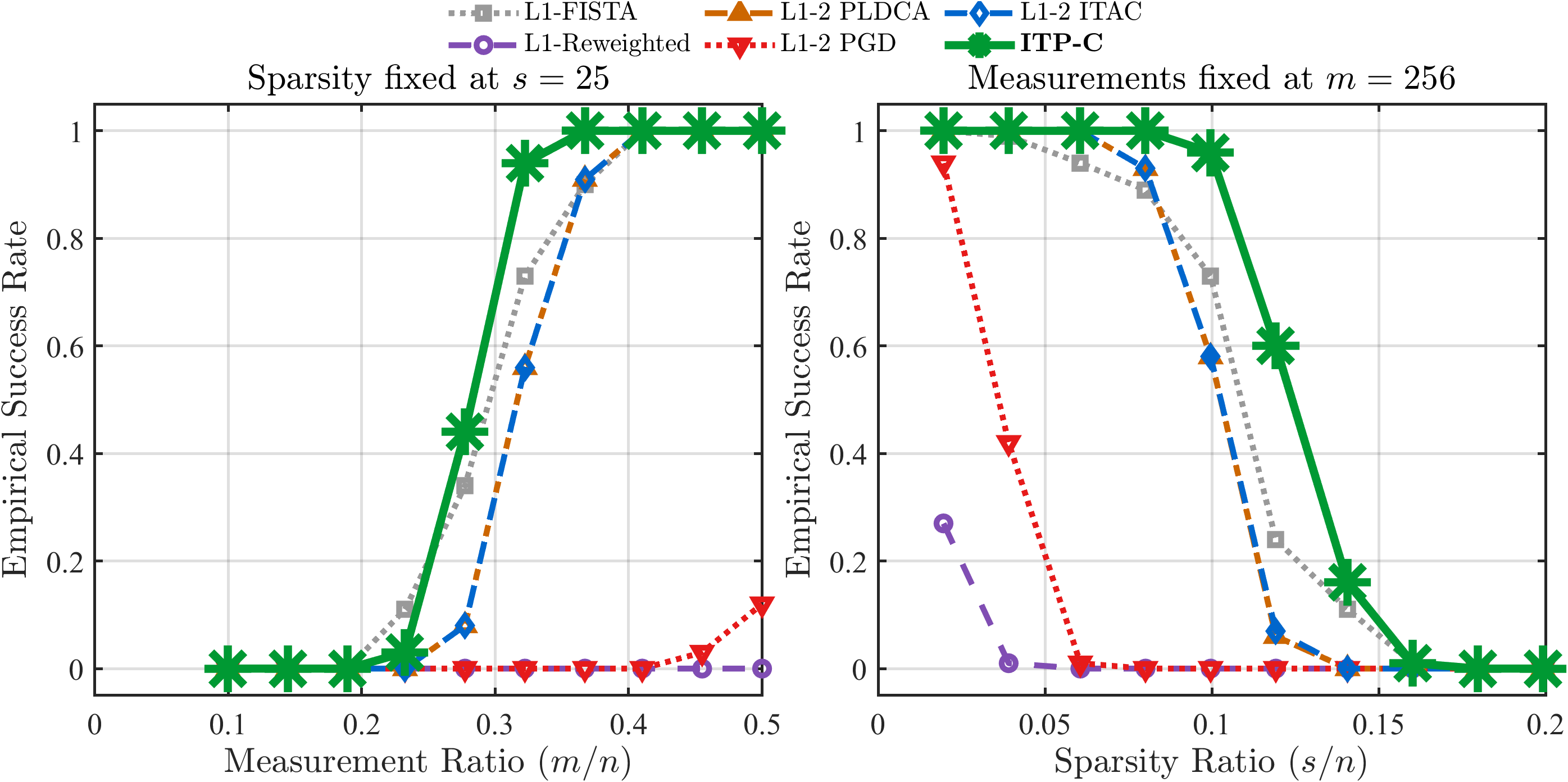}\\[-0.8mm]
      {\small (a) Noiseless scenario ($\sigma=0$)}
    \end{minipage}\hfill
    \begin{minipage}[t]{0.49\textwidth}
      \centering
      \includegraphics[width=\linewidth]{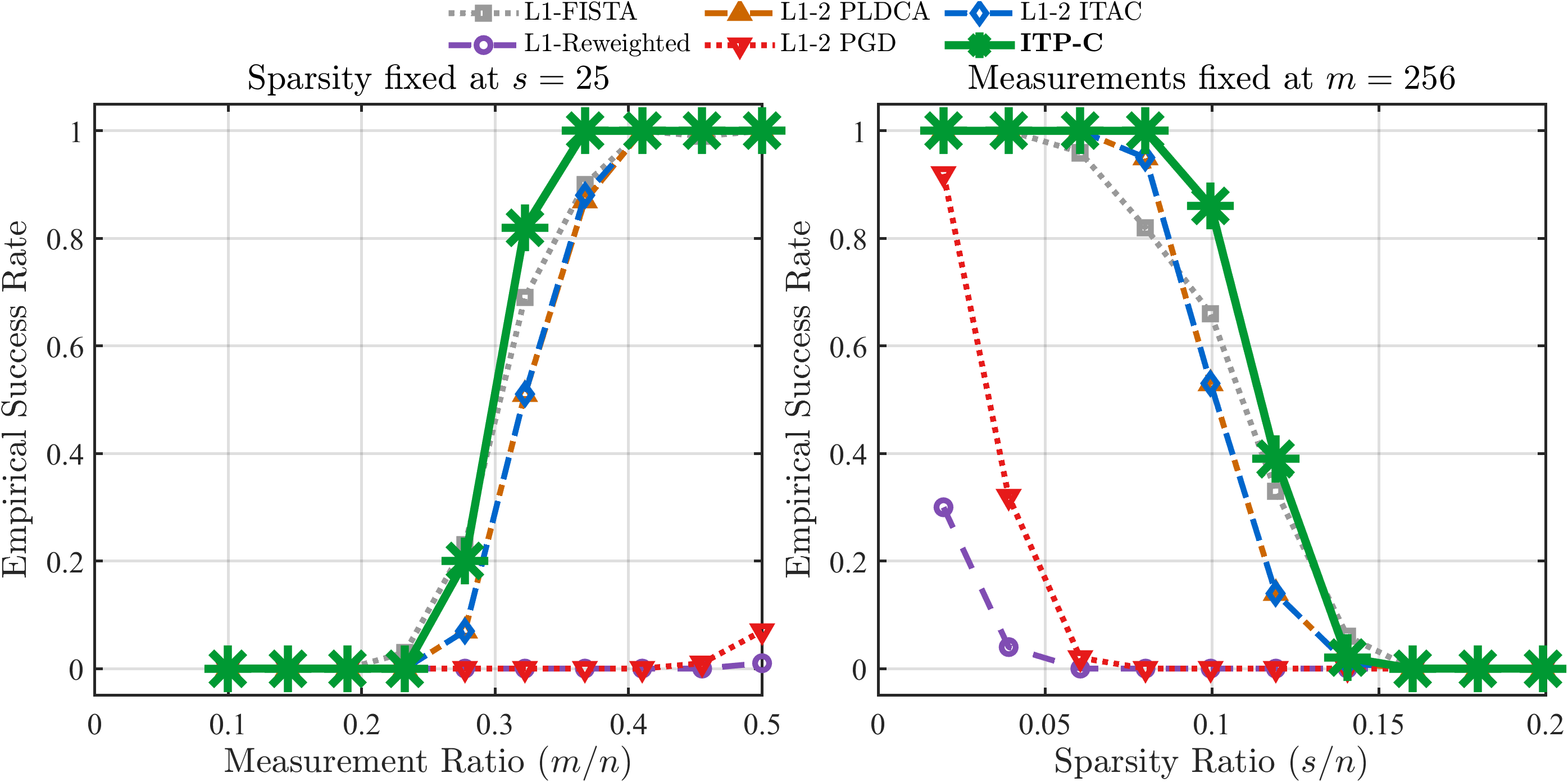}\\[-0.8mm]
      {\small (b) Noisy scenario ($\sigma=0.001$)}
    \end{minipage}
    \caption{Empirical success rate curves under the PDCT-type sensing matrix.
    In each subfigure, the left panel reports the success rate versus the
    measurement ratio, and the right panel reports the success rate versus the
    sparsity ratio.}
    \label{fig:success_pdct}
\end{figure}

Figures~\ref{fig:success_gau} and~\ref{fig:success_pdct} compare the success
rates over repeated random trials. Under both sensing structures, ITP-C
consistently achieves the highest success rates among the compared methods,
especially at lower measurement ratios and larger sparsity levels. The
improvement remains visible in the noisy tests, showing that the proposed
descent-checked pursuit strategy is robust across a range of sampling and
noise regimes.


\FloatBarrier

\subsection{Application to image reconstruction}
\label{subsec:image}

We finally evaluate ITP-C on image reconstruction examples. Natural images are
not exactly sparse in the pixel domain, but they often admit compressible
representations under wavelet transforms. In the experiments, each image is
represented using a four-level Symlets-8 wavelet transform and measured by
Gaussian random projections. Gaussian measurement noise with standard deviation
$\sigma=10^{-3}$ is added in all tests. Three benchmark images, Peppers,
Circuit, and Coins, are considered under sampling rates
$\delta\in\{0.45,0.50,0.55\}$.

For an image $u\in[0,1]^{N_1\times N_2}$ and its reconstruction
$\hat u$, the PSNR is computed as
\[
{\rm PSNR}(\hat u,u)
=
10\log_{10}
\left(
\frac{N_1N_2}{\|\hat u-u\|_F^2}
\right).
\] 

\begin{table}[htbp]
  \centering
  \caption{Comparison of PSNR (dB) for different algorithms and sampling rates ($\delta$) in image reconstruction with Gaussian measurement noise $\sigma=10^{-3}$.}
  \label{tab:psnr_comparison}
  \small
  \setlength{\tabcolsep}{3pt}
  \renewcommand{\arraystretch}{1.08}
  \resizebox{0.98\textwidth}{!}{%
  \begin{tabular}{l c c c c c c c}
    \toprule
    Image name & $\delta$ & L1-FISTA & L1-Reweighted & L1-2 PLDCA & L1-2 PGD & L1-2 ITAC & \textbf{ITP-C} \\
    \midrule
    \multirow{3}{*}{Peppers}
      & 0.45 & 17.62 & 12.72 & 17.25 & 12.56 & 17.36 & \textbf{23.11} \\
      & 0.50 & 20.33 & 13.18 & 18.75 & 12.97 & 18.88 & \textbf{27.06} \\
      & 0.55 & 26.88 & 14.01 & 21.62 & 13.75 & 21.73 & \textbf{27.88} \\
    \cmidrule(lr){1-8}
    \multirow{3}{*}{Circuit}
      & 0.45 & 17.02 & 13.06 & 16.27 & 12.84 & 16.34 & \textbf{23.66} \\
      & 0.50 & 20.71 & 13.63 & 19.69 & 13.37 & 19.79 & \textbf{25.75} \\
      & 0.55 & 25.66 & 14.22 & 21.03 & 13.90 & 21.18 & \textbf{28.74} \\
    \cmidrule(lr){1-8}
    \multirow{3}{*}{Coins}
      & 0.45 & 14.41 & 10.72 & 14.89 & 10.61 & 14.97 & \textbf{20.41} \\
      & 0.50 & 19.25 & 11.79 & 18.29 & 11.64 & 18.42 & \textbf{30.07} \\
      & 0.55 & 19.94 & 11.52 & 18.29 & 11.37 & 18.47 & \textbf{29.90} \\
    \bottomrule
  \end{tabular}
  }
\end{table}

Table~\ref{tab:psnr_comparison} reports the PSNR values of the compared
methods. ITP-C achieves the highest PSNR in all reported cases, showing that
the proposed descent-checked pursuit strategy improves reconstruction quality.



Figure~\ref{fig:visual_comp} provides a visual comparison at sampling rate
$\delta=0.50$ under Gaussian measurement noise with $\sigma=10^{-3}$. The
visual results are consistent with the PSNR values in
Table~\ref{tab:psnr_comparison}. Compared with the other tested methods, ITP-C
produces fewer visible artifacts and better preserves fine image structures.

\begin{figure}[!tbp]
    \centering
    \includegraphics[width=0.96\textwidth]{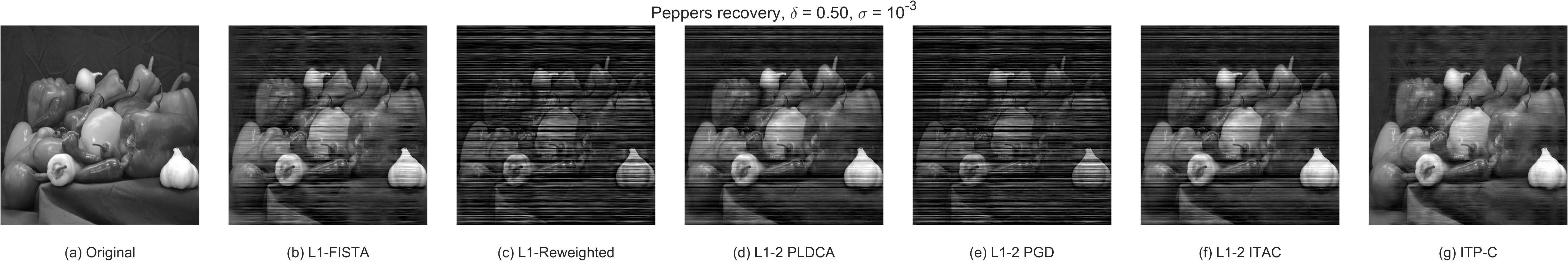}\\[-0.8mm]
    {\small (a) Peppers recovery ($\delta=0.50$, $\sigma=10^{-3}$)}\\[0.3mm]
    \includegraphics[width=0.96\textwidth]{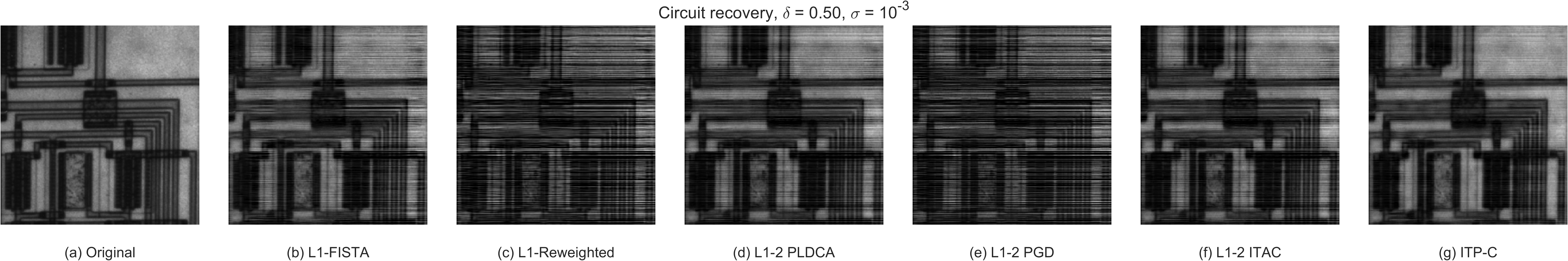}\\[-0.8mm]
    {\small (b) Circuit recovery ($\delta=0.50$, $\sigma=10^{-3}$)}\\[0.3mm]
    \includegraphics[width=0.96\textwidth]{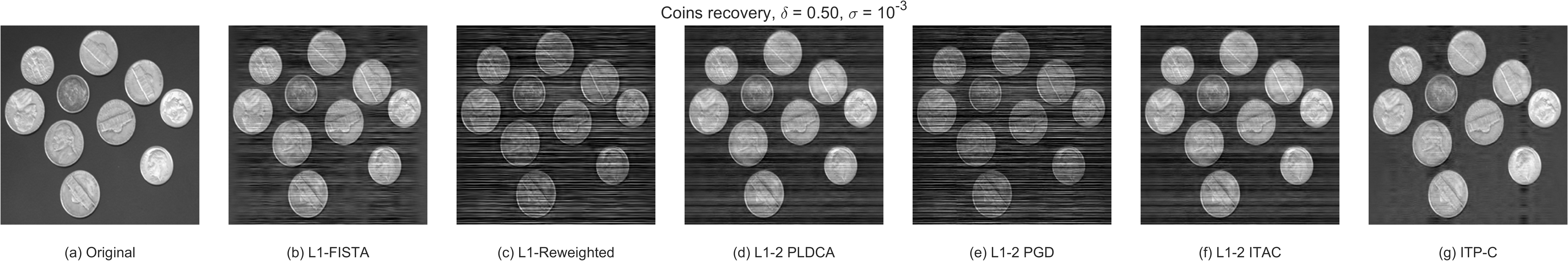}\\[-0.8mm]
    {\small (c) Coins recovery ($\delta=0.50$, $\sigma=10^{-3}$)}
    \caption{Visual comparison of reconstructed benchmark images at a sampling
    rate of $\delta=0.50$ under Gaussian measurement noise $\sigma=10^{-3}$.
    Within each row, the first panel displays the original ground-truth image,
    and the remaining panels show the recovery results of the compared
    algorithms.}
    \label{fig:visual_comp}
\end{figure}

\section{Conclusion}
\label{sec:conclusion}

In this paper, we proposed an iterative thresholding pursuit method with
continuation for $\ell_{1-2}$-regularized sparse recovery. The method combines
the adaptive support-identification ability of the $\ell_{1-2}$ proximal
thresholding step with a restricted least-squares pursuit correction on the
identified support. A strict descent check is incorporated to control the
possible instability of the pursuit correction and to maintain the descent
property of the continuation scheme. The resulting algorithm does not require
the true sparsity level as an input and provides a prior-free active-set
refitting mechanism.

We established the convergence properties of the proposed method. In
particular, the generated sequence is shown to be bounded, to satisfy a
sufficient decrease property, and to converge to a critical point of the target
objective under the Kurdyka--\L{}ojasiewicz framework. We also proved a local
support identification result and a conditional oracle property: once the
correct support is identified and the pursuit candidate is accepted, the method
coincides with the oracle restricted least-squares estimator and satisfies a
stable noise-dependent error bound.

Numerical experiments on synthetic sparse recovery and image reconstruction
demonstrated the effectiveness of the descent-checked pursuit strategy. Future work will investigate efficient implementations of the restricted
least-squares pursuit step, including preconditioning and warm-started
iterative solvers, as well as extensions to structured sparsity models and
broader sensing structures.



\end{document}